\begin{document}

\title[Continuity in a parameter of solutions to generic problems]{Continuity in a parameter of solutions to generic boundary-value problems}

\author[V. Mikhailets, A. Murach, V. Soldatov]{Vladimir A. Mikhailets, Aleksandr A. Murach, Vitalii Soldatov}

\address{Vladimir A. Mikhailets \newline
Department of Nonlinear Analysis,
Institute of Mathematics, National Academy of Sciences of Ukraine,
Tereshchenkivska Str. 3, 01004 Kyiv-4, Ukraine}
\email{mikhailets@imath.kiev.ua}

\address{Aleksandr A. Murach \newline
Department of Nonlinear Analysis,
Institute of Mathematics, National Academy of Sciences of Ukraine,
Tereshchenkivska Str. 3, 01004 Kyiv-4, Ukraine;\newline
Chernihiv National T.G. Shevchenko Pedagogical University, Chernihiv, Ukraine}
\email{murach@imath.kiev.ua}

\address{Vitalii Soldatov \newline
Department of Nonlinear Analysis,
Institute of Mathematics, National Academy of Sciences of Ukraine,
Tereshchenkivska Str. 3, 01004 Kyiv-4, Ukraine}
\email{soldatovvo@ukr.net}

\subjclass[2010]{34B08}
\keywords{Differential system; boundary-value problem; H\"older space; continuity in a parameter}

\begin{abstract}
We introduce the most general class of linear boundary-value problems for systems of first-order ordinary differential equations whose solutions belong to the complex H\"older space $C^{n+1,\alpha}$, with $0\leq n\in\mathbb{Z}$ and $0\leq\alpha\leq1$. The boundary conditions can contain derivatives $y^{(r)}$, with $1\leq r\leq n+1$, of the solution $y$ to the system. For parameter-dependent problems from this class, we obtain constructive criterion under which their solutions are continuous in the normed space $C^{n+1,\alpha}$ with respect to the parameter.
\end{abstract}

\maketitle
\numberwithin{equation}{section}
\newtheorem{theorem}{Theorem}[section]
\newtheorem{lemma}[theorem]{Lemma}
\newtheorem{proposition}[theorem]{Proposition}
\newtheorem{remark}[theorem]{Remark}
\newtheorem{example}[theorem]{Example}
\newtheorem{corollary}[theorem]{Corollary}
\allowdisplaybreaks

\renewcommand{\baselinestretch}{1.1}

\section{Introduction}\label{1}

Questions concerning the validity of passage to the limit in parameter-dependent differential equations arise in various problems. These questions are best clear up for the Cauchy problem for systems of first-order ordinary differential equations. Gikhman \cite{Gikhman1952}, Krasnosel'skii and S.~Krein \cite{KrasnoselskiiKrein1955}, Kurzweil and Vorel \cite{KurzweilVorel1957CMJ} obtained fundamental results relating to continuous dependence in the parameter of solutions to the Cauchy problem for nonlinear systems. For linear systems, these results were refined and supplemented by Levin \cite{Levin1967dan}, Opial \cite{Opial1967}, Reid \cite{Reid1967}, and Nguyen The Hoan \cite{NguenTheHoan1993}.

Parameter-dependent boundary-value problems are far less studied than the Cauchy problem. Kiguradze  \cite{Kiguradze1987, Kiguradze1975, Kiguradze2003} and Ashordia \cite{Ashordia1996} introduced and investigated the class of general linear boundary-value problems for systems of first-order ordinary differential equations. The solutions $y$ to these problems are supposed to be absolutely continuous on a compact interval $[a,b]$, and the boundary condition is of the form $By=q$ where $B$ is an arbitrary linear continuous operator from $C([a,b],\mathbb{R}^{m})$ to $\mathbb{R}^{m}$ ($m$ is the number of differential equations in the system). Kiguradze and Ashordia obtained conditions under which the solutions to a parameter-dependent problem from this class are continuous in $C([a,b],\mathbb{R}^{m})$ with respect to the parameter. Recently these results were refined and generalized to complex-valued functions and systems of higher-order differential equations \cite{MikhailetsReva2008DAN9, KodliukMikhailetsReva2013, MikhailetsChekhanova2015}.

In this paper we introduce a new class of boundary-value problems for systems of first-order linear differential equations. In contrast to the usual boundary-value problems, this class relates to a given function space. We consider systems whose coefficients and right-hand sides belong to the H\"older space $(C^{n,\alpha})^{m}:=C^{n,\alpha}([a,b],\mathbb{C}^{m})$, with $0\leq n\in\mathbb{Z}$ and $0\leq\alpha\leq1$. Since the solutions $y$ to each of these systems run through the whole space $(C^{n+1,\alpha})^{m}$, we consider the most general boundary condition of the form $By=q$ where $B$ is an arbitrary linear continuous operator from $(C^{n+1,\alpha})^{m}$ to $\mathbb{C}^{m}$. This condition can contain the derivatives $y^{(r)}$, with $1\leq r\leq n$, of the solution. We say that these boundary-value problems are generic with respect to the H\"older space $C^{n+1,\alpha}$.

We investigate parameter-dependent boundary-value problems from the class introduced. We will find sufficient and necessary constructive conditions under which the solutions to these problems are continuous in $(C^{n+1,\alpha})^{m}$ with respect to the parameter. We will also prove a two-sided estimate for the degree of convergence of the solutions.

Note that the generic boundary-value problems with respect to the Sobolev spaces and to the spaces $C^{(n+1)}$ were introduced in
\cite{MikhailetsReva2008DAN8, KodliukMikhailets2013JMS} and \cite{MikhailetsChekhanova2014DAN7, Soldatov2015UMJ}, where sufficient conditions for continuous dependence in parameter of solutions were obtained. These results were applied to the investigation of multipoint boundary-value problems \cite{Kodliuk2012Dop11}, Green's matrices of boundary-value problems \cite{KodliukMikhailetsReva2013, MikhailetsChekhanova2015}, in the spectral theory of differential operators with distributional coefficients \cite{GoriunovMikhailets2010MN87.2, GoriunovMikhailetsPankrashkin2013EJDE, GoriunovMikhailets2012UMJ63.9}.

The approach developed in this paper can be applied to generic boundary-value problems with respect to other function spaces.

\section{Main results}\label{3sec2}

Throughout the paper we arbitrarily fix a compact interval $[a,b]\subset\mathbb{R}$, integers $n\geq0$ and $m\geq1$, and a real number $\alpha$ such that $0\leq\alpha\leq1$. We use the H\"older spaces
\begin{equation}\label{3.Holder}
(C^{l,\alpha})^{m}:=C^{l,\alpha}([a,b],\mathbb{C}^{m})\quad\mbox{and}
\quad(C^{l,\alpha})^{m\times m}:=C^{l,\alpha}([a,b],\mathbb{C}^{m\times m}),
\end{equation}
with $0\leq l\in\mathbb{Z}$. They consist respectively of all vector-valued functions and $(m\times m)$-matrix-valued functions whose components belong to $C^{l,\alpha}:=C^{l,\alpha}([a,b],\mathbb{C})$, with the norm of the functions being equal to the sum of the norms in $C^{l,\alpha}$ of all their components. We denote all these norms by $\|\cdot\|_{l,\alpha}$. Certainly, if $\alpha=0$, then $C^{l,\alpha}$ stands for the space $C^{(l)}:=C^{(l)}([a,b],\mathbb{C})$ of all $l$ times continuously differentiable functions on $[a,b]$. We will recall the definition of the H\"older space $C^{l,\alpha}$ at the end of this section.

Let the parameter $\varepsilon$ runs through the set $[0,\epsilon_0)$, with the number $\epsilon_0>0$ being fixed. We consider the family of the following linear boundary value-problems:
\begin{gather}\label{3.syste}
L(\epsilon)y(t,\epsilon)\equiv y'(t,\epsilon)+
A(t,\epsilon)y(t,\epsilon)=f(t,\epsilon),\quad a\leq t\leq b,\\
B(\epsilon)y(\cdot,\epsilon)=q(\epsilon).\label{3.kue}
\end{gather}
Here, for every $\epsilon\in[0,\epsilon_0)$, we suppose that $y(\cdot,\epsilon)\in(C^{n+1,\alpha})^{m}$ is an unknown vector-valued
function, whereas the matrix-valued function
$A(\cdot,\epsilon)\in(C^{n,\alpha})^{m\times m}$, vector-valued function $f(\cdot,\epsilon)\in(C^{n,\alpha})^m$, continuous linear operator
\begin{equation}\label{3.Be}
B(\epsilon):(C^{n+1,\alpha})^m \to \mathbb{C}^{m}
\end{equation}
and vector $q(\epsilon)\in \mathbb{C}^{m}$ are arbitrarily given. Note that we interpret vectors as columns.

The boundary condition \eqref{3.kue} with the continuous operator \eqref{3.Be} is the most general for equation~\eqref{3.syste} in view of Lemma~\ref{3.sk}, which will be given in the next section. This condition covers all the classical types of boundary conditions such as initial conditions in the Cauchy problem, various multipoint conditions, integral conditions, conditions used in mixed boundary-value problems, and also nonclassical conditions containing the derivatives $y^{(r)}(\cdot,\varepsilon)$ with $1\leq r\leq n+1$.

By analogy with papers \cite{KodliukMikhailets2013JMS}  and \cite{Soldatov2015UMJ} we say that the boundary-value problem \eqref{3.syste}, \eqref{3.kue} is generic with respect to the space $C^{n+1,\alpha}$. (In these papers, the notion of a generic (or, in other words, total) boundary-value problem is introduced with respect to the Sobolev spaces and spaces of continuously differentiable functions respectively).

For the boundary-value problem \eqref{3.syste}, \eqref{3.kue}, we consider the following four

\medskip

\noindent\textbf{Limit Conditions} as $\epsilon\to0+$:
\begin{itemize}
  \item [(I)] $A(\cdot,\epsilon)\to A(\cdot,0)$ in $(C^{n,\alpha})^{m\times m}$;
  \item [(II)] $B(\epsilon)y\to B(0)y$ in $\mathbb{C}^{m}$ for every $y\in(C^{n+1,\alpha})^m$;
  \item [(III)] $f(\cdot,\epsilon)\to f(\cdot,0)$ in $(C^{n,\alpha})^{m}$;
  \item [(IV)] $q(\epsilon)\to q(0)$ in $\mathbb{C}^{m}$.
\end{itemize}

\smallskip

We also consider

\medskip

\noindent\textbf{Condition (0).} The limiting homogeneous boundary-value problem
\begin{gather*}
L(0)\,y(t,0)=0,\quad a\leq t\leq b,\quad\mbox{and}\quad B(0)\,y(\cdot,0)=0
\end{gather*}
has only the trivial solution.

\medskip

Let us formulate our

\medskip

\noindent\textbf{Basic Definition.} We say that the solution to the boundary-value problem \eqref{3.syste}, \eqref{3.kue} is continuous in the parameter $\varepsilon$ at $\varepsilon=0$ if the following two conditions are fulfilled:
\begin{itemize}
\item [$(\ast)$] There exists a positive number $\varepsilon_{1}<\varepsilon_{0}$ such that for arbitrary $\varepsilon\in[0,\varepsilon_{1})$, $f(\cdot,\epsilon)\in(C^{n,\alpha})^m$, and $q(\varepsilon)\in\mathbb{C}^{m}$ this problem has a unique solution $y(\cdot,\epsilon)\in(C^{n+1,\alpha})^{m}$.
      \item [$(\ast\ast)$] Limit Conditions (III) and (IV) imply that
\begin{equation}\label{3.gu}
y(\cdot,\epsilon)\to y(\cdot,0)\quad\mbox{in}\quad(C^{n+1,\alpha})^{m}
\quad\mbox{as}\quad\epsilon\to0+.
\end{equation}
\end{itemize}

\smallskip

\noindent\textbf{Main Theorem.} \it The solution to the boundary-value problem \eqref{3.syste}, \eqref{3.kue} is continuous in the parameter $\varepsilon$ at $\varepsilon=0$ if and only if this problem satisfies Condition~\rm(0) \it and Limit Conditions \rm (I) \it and \rm (II)\it.\rm

\medskip

We supplement Main Theorem by

\begin{theorem}\label{3.th-bound}
Assume that the boundary-value problem \eqref{3.syste}, \eqref{3.kue} satisfies Condition~\rm(0) \it and Limit Conditions \rm (I) \it and \rm (II)\it. Then there exist positive numbers $\epsilon_{2}<\epsilon_{1}$ and $\varkappa_{1}$, $\varkappa_{2}$ such that for every $\epsilon\in(0,\epsilon_{2})$ we have the two-sided estimate
\begin{equation}\label{3.bound}
\begin{aligned}
\varkappa_{1}
&\bigl(\,\|L(\epsilon)y(\cdot,0)-f(\cdot,\epsilon))\|_{n,\alpha}+
\|B(\epsilon)y(\cdot,0)-q(\epsilon)\|_{\mathbb{C}^{m}}\bigr)\\
&\leq\|y(\cdot,0)-y(\cdot,\epsilon)\|_{n+1,\alpha}\\
&\leq\varkappa_{2}
\bigl(\,\|L(\epsilon)y(\cdot,0)-f(\cdot,\epsilon))\|_{n,\alpha}+
\|B(\epsilon)y(\cdot,0)-q(\epsilon)\|_{\mathbb{C}^{m}}\bigr).
\end{aligned}
\end{equation}
Here, the numbers $\epsilon_{2}$, $\varkappa_{1}$, and $\varkappa_{2}$ do not depend on $y(\cdot,0)$, $y(\cdot,\epsilon)$, $f(\cdot,\epsilon)$, and $q(\epsilon)$. \rm
\end{theorem}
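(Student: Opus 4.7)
The plan is to reduce the two-sided estimate \eqref{3.bound} to uniform two-sided control of the norm of the operator family
\[
T(\epsilon):=\bigl(L(\epsilon),\,B(\epsilon)\bigr):(C^{n+1,\alpha})^{m}\to(C^{n,\alpha})^{m}\times\mathbb{C}^{m}
\]
on a small interval $[0,\epsilon_{2})$. Setting $u(\cdot,\epsilon):=y(\cdot,0)-y(\cdot,\epsilon)$, linearity of $L(\epsilon)$ and $B(\epsilon)$ immediately gives
\[
T(\epsilon)\,u(\cdot,\epsilon)=\bigl(L(\epsilon)y(\cdot,0)-f(\cdot,\epsilon),\ B(\epsilon)y(\cdot,0)-q(\epsilon)\bigr).
\]
Thus the right-hand side of \eqref{3.bound} is the norm of $T(\epsilon)u(\cdot,\epsilon)$, and the whole estimate reduces to
\[
\varkappa_{1}\|T(\epsilon)u(\cdot,\epsilon)\|\leq\|u(\cdot,\epsilon)\|_{n+1,\alpha}\leq\varkappa_{2}\|T(\epsilon)u(\cdot,\epsilon)\|,
\]
i.e.\ to uniform bounds $\|T(\epsilon)\|\leq 1/\varkappa_{1}$ and $\|T(\epsilon)^{-1}\|\leq\varkappa_{2}$.

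For the lower inequality (direct bound on $\|T(\epsilon)\|$) I would argue as follows. Limit Condition~(I) gives a bound on $\|A(\cdot,\epsilon)\|_{n,\alpha}$ uniform in $\epsilon$ near $0$, which, combined with the elementary estimate for the differentiation operator on $C^{n+1,\alpha}$, controls the $L(\epsilon)$-component of $T(\epsilon)$. Limit Condition~(II) is the hypothesis of the Banach--Steinhaus theorem for the family $\{B(\epsilon)\}$, so $\sup_{\epsilon\in[0,\epsilon_{0}')}\|B(\epsilon)\|<\infty$ on a sufficiently small neighbourhood of $0$. Together these give a uniform bound $\|T(\epsilon)\|\leq C$, whence $\varkappa_{1}=1/C$ works.

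For the upper inequality (bound on $\|T(\epsilon)^{-1}\|$) I would invoke the Main Theorem. Its hypotheses Condition~(0) and Limit Conditions~(I), (II) provide an $\epsilon_{1}>0$ on which $T(\epsilon)$ is a bijection, so by the open mapping theorem $T(\epsilon)^{-1}$ is a bounded operator for each $\epsilon\in[0,\epsilon_{1})$. Moreover, specialising the Main Theorem to constant data $f(\cdot,\epsilon)\equiv g$, $q(\epsilon)\equiv r$ (which trivially satisfies Limit Conditions~(III) and~(IV)) yields
\[
T(\epsilon)^{-1}(g,r)\longrightarrow T(0)^{-1}(g,r)\qquad\mbox{as}\quad\epsilon\to 0+
\]
for every $(g,r)\in(C^{n,\alpha})^{m}\times\mathbb{C}^{m}$; that is, $T(\epsilon)^{-1}\to T(0)^{-1}$ in the strong operator topology. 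A second application of Banach--Steinhaus then produces a constant $M$ and an $\epsilon_{2}\in(0,\epsilon_{1})$ with $\|T(\epsilon)^{-1}\|\leq M$ for every $\epsilon\in[0,\epsilon_{2})$, so $\varkappa_{2}=M$ closes the estimate.

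The main obstacle I expect is this last uniform-bound step, because Banach--Steinhaus is naturally a statement about sequences while the parameter $\epsilon$ ranges over a continuum. The standard remedy is a contradiction argument: if no $\epsilon_{2}>0$ made $\|T(\epsilon)^{-1}\|$ uniformly bounded on $[0,\epsilon_{2})$, one could pick a sequence $\epsilon_{k}\to 0+$ with $\|T(\epsilon_{k})^{-1}\|\to\infty$, contradicting Banach--Steinhaus applied to the strongly convergent sequence $\{T(\epsilon_{k})^{-1}\}$. Once that is in hand, everything else is essentially bookkeeping with the defining norms.
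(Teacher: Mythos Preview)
Your proposal is correct and follows essentially the same route as the paper: reduce \eqref{3.bound} to uniform bounds on $\|(L(\epsilon),B(\epsilon))\|$ and $\|(L(\epsilon),B(\epsilon))^{-1}\|$, obtain strong convergence of both families from Limit Conditions (I), (II) and (the sufficiency direction of) the Main Theorem, and then extract the uniform bounds via the Banach--Steinhaus theorem together with the sequence-contradiction argument you describe. The only cosmetic difference is that the paper applies Banach--Steinhaus to the full operator $(L(\epsilon),B(\epsilon))$ at once rather than bounding the $L(\epsilon)$-component directly from Condition~(I), and it cites Lemmas~\ref{3.th-isomorph} and~\ref{3.th-limit} explicitly instead of the Main Theorem.
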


According to \eqref{3.bound}, the error and discrepancy of the solution $y(\cdot,\epsilon)$ to the boundary-value problem \eqref{3.syste}, \eqref{3.kue} are of the same degree. Here, we consider $y(\cdot,0)$ as an approximate solution to this problem.

We will prove Main Theorem and Theorem~\ref{3.th-bound} in Section~\ref{3sec4}.

\begin{remark}\rm
Let us discuss the boundary-value problem \eqref{3.syste}, \eqref{3.kue} and Main Theorem in the important case where $\alpha=0$. In this case an arbitrary linear continuous operator \eqref{3.Be} can be uniquely represented in the form
\begin{equation}\label{Be-description}
B(\varepsilon)z=
\sum\limits_{k=1}^{n+1}\beta_{k}(\varepsilon)\,z^{(k-1)}(a)+
\int\limits_a^b(d\Phi(t,\varepsilon))z^{(n+1)}(t),\quad z\in(C^{(n+1)})^m,
\end{equation}
in which every $\beta_{k}(\varepsilon)$ is a number $m\times m$-matrix whereas $\Phi(\cdot,\varepsilon)$ is an $m\times m$-matrix-valued function formed by scalar functions that are of bounded variation on $[a,b]$, right-continuous on $(a,b)$, and equal to zero at $t=a$. Here, the integral is understood in the Riemann-Stieltjes sense. This representation follows from the known description of the dual of  $C^{(n+1)}$; see, e.g., \cite[p.~344]{DanfordShvarts1958}. Applying  \eqref{Be-description}, we can reformulate Limit Condition~(II) in an explicit form. Namely, Limit Condition~(II) is equivalent to that the following four conditions are fulfilled as $\varepsilon\to0+$:
\begin{itemize}
  \item [(2a)] $\beta_{k}(\varepsilon)\to\beta_{k}(0)$ for every $k\in\{1,\ldots,n+1\}$;
  \item [(2b)] $\|V_{a}^{b}\Phi(\cdot,\epsilon)\|_{\mathbb{C}^{m\times m}}
      =O(1)$;
  \item [(2c)] $\Phi(b,\epsilon)\to\Phi(b,0)$;
  \item [(2d)] $\int_{a}^{t}\Phi(s,\epsilon)ds\to
\int_{a}^{t}\Phi(s,0)ds$ for every $t\in(a,b]$.
\end{itemize}
(Here, of course, the convergence is considered in $\mathbb{C}^{m\times m}$.) This equivalence follows from the Riesz theorem on the weak convergence of linear continuous functionals on $C^{(0)}([a,b],\mathbb{C})$ (see, e.g., \cite[Ch.~III, Sect.~55]{RieszSz-Nagy56}). It is useful to compare these conditions with the criterion of the norm convergence of the operators $B(\varepsilon)\to B(0)$ as $\varepsilon\to0+$. According to this criterion, the latter convergence is equivalent to that condition (2a) and
\begin{equation}\label{3.conv-in-variation}
V_{a}^{b}\bigl(\Phi(\cdot,\epsilon)-\Phi(\cdot,0)\bigr)\to0
\end{equation}
are fulfilled as $\varepsilon\to0+$. Condition \eqref{3.conv-in-variation} is much stronger than the system of conditions (2b)--(2d). This becomes especially clear if we observe that condition~\eqref{3.conv-in-variation} implies the uniform convergence of the functions $\Phi(t,\varepsilon)$ to $\Phi(t,0)$ on $[a,b]$ as $\varepsilon\to0+$ whereas conditions (2b)--(2d) do not imply the pointwise convergence of these functions at least in one point of $(a,b)$.

\end{remark}
At the end of this section, we recall the definition of the H\"older spaces and discuss some notions and designations relating to these spaces. Let an integer $l\geq0$. We endow the Banach space $C^{(l)}:=C^{(l)}([a,b],\mathbb{C})$ of all $l$ times continuously differentiable functions $x:[a,b]\rightarrow\mathbb{C}$ with the norm
$$
\|x\|_{l}:=\sum_{j=0}^{l}
\max\bigl\{|x^{(j)}(t)|:\,t\in[a,b]\bigr\}.
$$
By definition, the H\"older space $C^{l,\alpha}:=C^{l,\alpha}([a,b],\mathbb{C})$, with $0<\alpha\leq1$, consists of all functions $x\in\nobreak C^{(l)}$ such that
$$
\|x^{(l)}\|_{\alpha}':=
\sup\Bigl\{\frac{|x^{(l)}(t_2)-x^{(l)}(t_1)|}{|t_2-t_1|^{\alpha}}:\
t_1,t_2\in[a,b],t_1\neq t_2\Bigr\}<\infty.
$$
This space is Banach with respect to the norm
$$
\|x\|_{l,\alpha}:=\|x\|_{l}+\|x^{(l)}\|_{\alpha}'.
$$
For the sake of uniformity, we use the designations $C^{l,0}:=C^{(l)}$ and $\|\cdot\|_{l,0}:=\|\cdot\|_{l}$.

Note that each $C^{l,\alpha}$, with $0\leq\alpha\leq1$, is a Banach algebra with respect to a certain norm which is equivalent to $\|\cdot\|_{l,\alpha}$. Of course, this is true for the spaces \eqref{3.Holder} as well. The norms in them are also denoted by $\|\cdot\|_{l,\alpha}$. It will be always clear from the context to which space (scalar or vector-valued or matrix-valued functions) these norms relate.

\section{Preliminaries}\label{3sec3}

In this section, we investigate the boundary-value problem \eqref{3.syste}, \eqref{3.kue} provided that the parameter $\varepsilon$ is fixed. We establish basic properties of this problem; they will be used in our proof of Main Theorem. Fixing and then omitting $\varepsilon$ in the problem \eqref{3.syste}, \eqref{3.kue}, we write it in the form
\begin{gather}\label{3.syst}
Ly(t)\equiv  y'(t)+A(t)y(t)=f(t),\quad a\leq t\leq b,\\
By=q.\label{3.ku}
\end{gather}

So, we consider an arbitrary boundary-value problem \eqref{3.syst}, \eqref{3.ku} which is generic with respect to the function space $C^{n+1,\alpha}$. The latter means that $y\in(C^{n+1,\alpha})^m$,
$A\in(C^{n,\alpha})^{m\times m}$, $f\in(C^{n,\alpha})^m$, $B$ is a linear continuous operator from $(C^{n+1,\alpha})^m$ to $\mathbb{C}^{m}$, and $q\in\mathbb{C}^{m}$. We rewrite the problem \eqref{3.syst}, \eqref{3.ku} in the brief form $(L,B)y=(f,q)$ with the help of the continuous linear operator
\begin{equation}\label{3.L_B}
(L,B):(C^{n+1,\alpha})^{m}\rightarrow (C^{n,\alpha})^{m}\times\mathbb{C}^{m}.
\end{equation}

\begin{theorem}\label{3.t1}
The operator~\eqref{3.L_B} is Fredholm with index zero.
\end{theorem}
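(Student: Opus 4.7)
The plan is to split the argument into two steps: first establish the Fredholm properties of the pure differential operator $L$ on $(C^{n+1,\alpha})^m$, and then account for the adjunction of the finite-dimensional boundary functional $B$.

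First, I would study the operator $L\colon(C^{n+1,\alpha})^m\to(C^{n,\alpha})^m$, $Ly=y'+Ay$. Its continuity follows from the Banach algebra property of $C^{n,\alpha}$ noted at the end of Section~\ref{3sec2}. The kernel of $L$ is the solution space of the homogeneous linear ODE $y'+Ay=0$, which has dimension exactly $m$, since the columns of the fundamental matrix lie in $(C^{n+1,\alpha})^m$. For surjectivity, given $f\in(C^{n,\alpha})^m$ I would solve the Cauchy problem $y'+Ay=f$, $y(a)=0$ by classical ODE theory and then upgrade the regularity of this $C^{1}$-solution to $(C^{n+1,\alpha})^m$ by iterating the identity $y'=f-Ay$ together with the algebra property of $C^{k,\alpha}$ for $k=0,1,\ldots,n$. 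This shows that $L$ is surjective Fredholm with $\dim\ker L=m$, and, by the open mapping theorem applied to $L$ restricted to a topological complement of $\ker L$, admits a bounded right inverse $R\colon(C^{n,\alpha})^m\to(C^{n+1,\alpha})^m$.

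Next I would analyze $(L,B)$. Fix a basis $z_1,\ldots,z_m$ of $\ker L$ and let $M\in\mathbb{C}^{m\times m}$ be the matrix whose columns are $Bz_1,\ldots,Bz_m$; denote $r:=\operatorname{rank}M$. A pair $(f,q)$ lies in $\operatorname{range}(L,B)$ iff $y:=Rf+\sum_i c_iz_i$ can be chosen with $By=q$, i.e. iff the linear system $Mc=q-B(Rf)$ is solvable in $\mathbb{C}^m$. Hence $\operatorname{range}(L,B)$ is the kernel of the continuous surjection $(f,q)\mapsto (q-B(Rf))+\operatorname{range}M$ onto the finite-dimensional quotient $\mathbb{C}^m/\operatorname{range}M$, so the range is closed and has codimension $m-r$. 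On the other hand, $\ker(L,B)=\{y\in\ker L:By=0\}$ corresponds via the chosen basis to $\ker M\subset\mathbb{C}^m$, which also has dimension $m-r$.

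Combining the two dimension counts yields $\operatorname{ind}(L,B)=(m-r)-(m-r)=0$, as required. The only step that I expect to demand real care is the bootstrap showing that the Cauchy solution actually sits in $(C^{n+1,\alpha})^m$ rather than merely in $(C^{(n+1)})^m$: one must invoke the algebra property in $C^{k,\alpha}$ at each integer level $0\leq k\leq n$ and handle the edge case $\alpha=0$ uniformly with $\alpha>0$. Everything else reduces to straightforward linear algebra in the $m$-dimensional target $\mathbb{C}^m$.
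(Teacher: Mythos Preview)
Your argument is correct. The regularity bootstrap you outline is precisely Lemma~\ref{3.sk} in the paper, and the subsequent linear-algebra computation of $\dim\ker(L,B)$ and $\operatorname{codim}\operatorname{range}(L,B)$ via the matrix $M$ is sound.

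The paper, however, reaches the conclusion by a shorter route once the bootstrap is in hand. Rather than introducing a right inverse $R$ and counting ranks, it observes that the Cauchy operator $(L,C)$ with $Cy:=y(a)$ is an \emph{isomorphism} $(C^{n+1,\alpha})^m\leftrightarrow(C^{n,\alpha})^m\times\mathbb{C}^m$ (this is the Cauchy problem plus Lemma~\ref{3.sk}), and then notes that $(L,B)-(L,C)=(0,B-C)$ has finite rank since its target is $\{0\}\times\mathbb{C}^m$. The standard stability of the Fredholm index under finite-rank perturbations then gives $\operatorname{ind}(L,B)=\operatorname{ind}(L,C)=0$ in one line. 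Your approach trades this black-box invocation for an explicit hands-on computation; it is more self-contained but longer, and it does not exploit the fact that the particular boundary map $C$ already makes $(L,C)$ invertible. Either way the substantive analytic work is the same bootstrap step you correctly flagged.
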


In connection with this theorem, we recall that a linear continuous operator $T:E_{1}\rightarrow E_{2}$ between Banach spaces $E_{1}$ and $E_{2}$ is called Fredholm if its kernel $\ker T$ and co-kernel $E_{2}/T(E_{1})$ are both
finite-dimensional. If this operator is Fredholm, then its range $T(E_{1})$ is closed in $E_{2}$ (see, e.g., \cite[Lemma~19.1.1]{Hermander85}). The finite index of the Fredholm operator $T$ is defined by the formula
$$
\mathrm{ind}\,T:=\dim\ker T-\dim(E_{2}/T(E_{1})).
$$

Before we prove Theorem~\ref{3.t1}, let us establish

\begin{lemma}\label{3.sk}
Let $A\in(C^{n,\alpha})^{m\times m}$. If a differentiable function $y:[a,b]\to\mathbb{C}^{m}$ is a solution to equation \eqref{3.syst} for a certain right-hand side $f\in(C^{n,\alpha})^{m}$, then $y\in\nobreak(C^{n+1,\alpha})^{m}$. Moreover, if $f$ runs through the whole space $(C^{n,\alpha})^{m}$, then the solutions to \eqref{3.syst} run through the whole space $(C^{n+1,\alpha})^{m}$.
\end{lemma}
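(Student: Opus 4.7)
My plan is to prove the first assertion by a bootstrap regularity argument and to deduce the second statement as an easy consequence. Since $y$ is differentiable on the compact interval $[a,b]$, it is in particular continuous; as $A$ is continuous and hence bounded on $[a,b]$, the product $Ay$ is continuous, so rewriting equation \eqref{3.syst} as $y'=f-Ay$ yields $y'\in(C^{(0)})^{m}$, i.e.\ $y\in(C^{(1)})^{m}$. In particular $y'$ is bounded, so $y$ is Lipschitz on $[a,b]$ and therefore belongs to $(C^{0,\alpha})^{m}$ for every $\alpha\in[0,1]$.

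Next I would iterate using the Banach algebra property of $C^{k,\alpha}$ recalled at the end of Section~\ref{3sec2}: extended componentwise, the multiplication sends $(C^{k,\alpha})^{m\times m}\times(C^{k,\alpha})^{m}$ into $(C^{k,\alpha})^{m}$. Assume inductively that $y\in(C^{k,\alpha})^{m}$ for some $0\leq k\leq n$. Since $A\in(C^{n,\alpha})^{m\times m}\subseteq(C^{k,\alpha})^{m\times m}$, we obtain $Ay\in(C^{k,\alpha})^{m}$; combined with $f\in(C^{n,\alpha})^{m}\subseteq(C^{k,\alpha})^{m}$, this gives $y'=f-Ay\in(C^{k,\alpha})^{m}$, whence $y\in(C^{k+1,\alpha})^{m}$. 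Iterating from $k=0$ up to $k=n$ yields $y\in(C^{n+1,\alpha})^{m}$, which proves the first assertion.

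For the second assertion, one inclusion is immediate from the first part. For the reverse inclusion, given any $z\in(C^{n+1,\alpha})^{m}$, set $f:=Lz=z'+Az$; then $z'\in(C^{n,\alpha})^{m}$ and, by the Banach algebra property, $Az\in(C^{n,\alpha})^{m}$, so $f\in(C^{n,\alpha})^{m}$ and $z$ is a solution of \eqref{3.syst} with this right-hand side. The only minor subtlety is the base of the bootstrap when $\alpha>0$: the derivative $y'$ is a priori known only to exist pointwise, so one must first exploit continuity of $f-Ay$ to upgrade $y$ from merely differentiable to Lipschitz, before the Banach algebra argument can take over and deliver the full Hölder regularity.
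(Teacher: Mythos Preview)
Your proof is correct and follows essentially the same bootstrap argument as the paper: first upgrade $y$ from merely differentiable to $(C^{(1)})^{m}\subset(C^{0,\alpha})^{m}$ using continuity of $f-Ay$, then inductively use that $A,f\in(C^{k,\alpha})$ forces $y'\in(C^{k,\alpha})^{m}$, hence $y\in(C^{k+1,\alpha})^{m}$, and finally observe that $L$ maps $(C^{n+1,\alpha})^{m}$ into $(C^{n,\alpha})^{m}$. Your write-up is in fact slightly more explicit than the paper's (you spell out the Lipschitz step and the Banach algebra property), but the structure is identical.
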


\begin{proof}
We suppose that a differentiable function $y$ is a solution to equation \eqref{3.syst} for certain $f\in(C^{n,\alpha})^{m}$. Let us prove that
$y\in\nobreak(C^{n+1,\alpha})^{m}$. Since $A$ and $f$ are at least continuous on $[a,b]$, we get
$y'=f-Ay\in(C^{(0)})^{m}$. Hence, $y\in(C^{(1)})^{m}\subset(C^{0,\alpha})^{m}$.
Moreover,
\begin{equation}\label{3.impl}
\bigl(\,y\in(C^{l,\alpha})^{m}\;\Rightarrow\;
y\in(C^{l+1,\alpha})^{m}\,\bigr)
\quad\mbox{for every}\quad l\in\mathbb{Z}\cap[0,n].
\end{equation}
Indeed, if $y\in(C^{l,\alpha})^{m}$ for some integer $l\in[0,n]$, then
$y'=f-Ay\in(C^{l,\alpha})^{m}$ and therefore $y\in(C^{l+1,\alpha})^{m}$. Now the inclusion $y\in(C^{0,\alpha})^{m}$ and property \eqref{3.impl} imply the required inclusion $y\in(C^{n+1,\alpha})^{m}$.

Let us now prove the last assertion of this Lemma. For arbitrary  $f\in(C^{n,\alpha})^{m}$ there exists a solution $y$ to equation \eqref{3.syst}. We have just proved that $y\in(C^{n+1,\alpha})^{m}$. This in view of the evident implication
$$
y\in(C^{n+1,\alpha})^{m}\;\Rightarrow\;Ly\in(C^{n,\alpha})^{m}
$$
means the last assertion of Lemma~\ref{3.sk}.
\end{proof}

\begin{proof}[Proof of Theorem~$\ref{3.t1}$]
Let $Cy:=y(a)$ for arbitrary $y\in(C^{n+1,\alpha})^{m}$. The linear mapping $y\mapsto(Ly,Cy)$, with $y\in(C^{n+1,\alpha})^{m}$, sets an isomorphism
\begin{equation}\label{3.isomorphism-LC}
(L,C):(C^{n+1,\alpha})^m\leftrightarrow(C^{n,\alpha})^m\times \mathbb{C}^{m}.
\end{equation}
Indeed, for arbitrary $f\in(C^{n,\alpha})^m$ and $q\in\mathbb{C}^{m}$, the Cauchy problem \eqref{3.syst} and $y(a)=q$ has a unique solution $y$. Owing to Lemma~\ref{3.sk} we have the inclusion $y\in(C^{n+1,\alpha})^m$. Therefore the above-mentioned mapping sets the one-to-one linear operator \eqref{3.isomorphism-LC}. Since this operator is continuous, it is an isomorphism by the Banach theorem on inverse operator.

We now note that the operator~\eqref{3.L_B} is a finite-dimensional perturbation of this isomorphism. Hence, \eqref{3.L_B} is a Fredholm operator with index zero (see, e.g., \cite[Corollary~19.1.8]{Hermander85}).
\end{proof}

Let us give a criterion for the operator \eqref{3.L_B} to be invertible. We arbitrarily choose a point $t_{0}\in[a,b]$ and let $Y$ denote the matriciant of system \eqref{3.syst} relating to $t_{0}$. Thus, $Y$ is the unique solution to the matrix boundary-value problem
\begin{gather}\label{3.0.3.1}
Y'(t)=-A(t)Y(t),\quad a\leq t\leq b,\\
Y(t_{0})=I_{m}.\label{3.0.3.2}
\end{gather}
Here and below, $I_m$ is the identity $(m \times m)$-matrix. Note that $Y\in(C^{n+1,\alpha})^{m\times m}$ due to Lemma~\ref{3.sk}.

Writing $Y:=(y_{j,k})_{j,k=1}^{m}$ we let
\begin{equation}\label{3.BY}
[BY]:=\left( B\begin{pmatrix}
                                  y_{1,1} \\
                                  \vdots \\
                                  y_{m,1} \\
                   \end{pmatrix}
\ldots\;
                B\begin{pmatrix}
                                  y_{1,m} \\
                                  \vdots \\
                                  y_{m,m} \\
                   \end{pmatrix}\right).
\end{equation}
Thus, the number matrix $[BY]$ is obtained by the action of $B$ on the columns of~$Y$.

Note that
\begin{equation*}
B(Yp)=[BY]p\quad\mbox{for every}\quad p\in\mathbb{C}^{m}.
\end{equation*}
This equality is directly verified.

\begin{theorem}\label{3.t2}
The operator \eqref{3.L_B} is invertible if and only if   $\det[BY]\neq0$.
\end{theorem}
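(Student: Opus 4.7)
The plan is to reduce invertibility to injectivity via Theorem~\ref{3.t1}, and then describe the kernel explicitly using the matriciant $Y$.

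First I would invoke Theorem~\ref{3.t1}: since $(L,B)$ is Fredholm with index zero, it is invertible if and only if $\ker(L,B)=\{0\}$, because for a Fredholm operator of index zero, $\dim\ker(L,B)=\dim\bigl((C^{n,\alpha})^{m}\times\mathbb{C}^{m}/(L,B)(C^{n+1,\alpha})^{m}\bigr)$, so triviality of the kernel forces surjectivity, and the Banach theorem on the inverse operator then supplies the bounded inverse.

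Next I would identify the kernel. A function $y\in(C^{n+1,\alpha})^{m}$ belongs to $\ker(L,B)$ precisely when $Ly=0$ and $By=0$. Every solution to the homogeneous linear system $Ly=0$ has the form $y=Yp$ for a unique vector $p\in\mathbb{C}^{m}$ (namely $p=y(t_{0})$), because $Y$ is the matriciant of~\eqref{3.syst} relative to $t_{0}$ and, by Lemma~\ref{3.sk}, $y$ automatically lies in $(C^{n+1,\alpha})^{m}$. Using the identity $B(Yp)=[BY]p$ stated right before the theorem, the condition $By=0$ becomes $[BY]p=0$. Thus
\begin{equation*}
\ker(L,B)=\bigl\{Yp:p\in\mathbb{C}^{m},\ [BY]p=0\bigr\},
\end{equation*}
and since $p\mapsto Yp$ is injective (as $Y(t_{0})=I_{m}$), we obtain $\dim\ker(L,B)=\dim\ker[BY]$.

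Finally I would conclude: $\ker(L,B)=\{0\}$ if and only if the number matrix $[BY]\in\mathbb{C}^{m\times m}$ has trivial kernel, which is equivalent to $\det[BY]\neq 0$. Combined with the first step, this gives the claimed criterion. I do not anticipate a genuine obstacle here; the only point to watch is the correct interpretation of $B(Yp)=[BY]p$ (which the paper records as directly verified) and the reliance on Theorem~\ref{3.t1} to upgrade injectivity to invertibility.
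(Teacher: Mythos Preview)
Your proposal is correct and follows essentially the same route as the paper: reduce invertibility to triviality of $\ker(L,B)$ via Theorem~\ref{3.t1}, then use the matriciant representation $y=Yp$ together with $B(Yp)=[BY]p$ to identify $\ker(L,B)$ with $\ker[BY]$. The paper presents this as two implications (showing $\ker(L,B)\neq\{0\}\Leftrightarrow\det[BY]=0$), whereas you phrase it via the dimension equality $\dim\ker(L,B)=\dim\ker[BY]$, but the content is the same.
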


\begin{proof}
By Theorem~\ref{3.t1}, the operator~\eqref{3.L_B} is invertible if and only if its kernel is trivial. Therefore Theorem~\ref{3.t2} will be proved if we show that
\begin{equation*}
\ker(L,B)\neq\{0\}\;\Leftrightarrow\;\det[BY]=0.
\end{equation*}

Assume first that $\ker(L,B)\neq\{0\}$. Then there exists a nontrivial solution $y\in(C^{n+1,\alpha})^{m}$ to the homogeneous boundary-value problem $(L,B)y=(0,0)$. Using the matriciant $Y$ we can write this solution in the form $y=Yp$ for a certain vector $p\in\mathbb{C}^{m}\setminus\{0\}$ and get the formula
$$
0=By=B(Yp)=[BY]p.
$$
Hence, $\det[BY]=0$.

Conversely, assume that $\det[BY]=0$. Then there exists a vector $p\in\mathbb{C}^{m}\setminus\{0\}$ such that $[BY]p=0$. The function $y:=Yp\in(C^{n+1,\alpha})^{m}$ is a nontrivial solution to the homogeneous system $Ly(t)=0$ for $t\in[a,b]$. Moreover,
$$
By=B(Yp)=[BY]p=0.
$$
Hence, $0\neq y\in\ker(L,B)$.
\end{proof}

Given $t_{0}\in[a,b]$, we let $\mathcal{Y}_{t_0}^{n+1,\alpha}$ denote the set of all matrix-valued functions $Y\in(C^{n+1,\alpha})^{m\times m}$ such that $Y(t_0)=I_{m}$ and $\det Y(t)\neq0$ for every $t\in[a,b]$. This set is endowed with the metric from the Banach space $(C^{n+1,\alpha})^{m\times m}$.

\begin{theorem}\label{3.tg}
Consider the nonlinear mapping $\Upsilon:A\mapsto Y$ that associates with any $A\in(C^{n,\alpha})^{m\times m}$ the unique solution  $Y\in(C^{n+1,\alpha})^{m\times m}$ to the boundary-value problem \eqref{3.0.3.1}, \eqref{3.0.3.2}. This mapping is a homeomorphism of the Banach space $(C^{n,\alpha})^{m\times m}$ onto the metric space
$\mathcal{Y}_{t_0}^{n+1,\alpha}$.
\end{theorem}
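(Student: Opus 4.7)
The plan is to establish four facts in sequence: $\Upsilon$ is a well-defined map into $\mathcal{Y}_{t_0}^{n+1,\alpha}$, it is a bijection with explicit inverse $Y\mapsto -Y'Y^{-1}$, this inverse is continuous, and finally $\Upsilon$ itself is continuous. Well-definedness follows from classical ODE theory (existence and uniqueness of the Cauchy problem~\eqref{3.0.3.1}--\eqref{3.0.3.2}), Lemma~\ref{3.sk} applied column by column to give $Y\in(C^{n+1,\alpha})^{m\times m}$, and Liouville's formula to give $\det Y(t)\neq 0$ on $[a,b]$. For surjectivity and injectivity, given $Y\in\mathcal{Y}_{t_0}^{n+1,\alpha}$ one sets $A:=-Y'Y^{-1}$; the cofactor formula, the fact that $\det Y$ is bounded away from $0$, and the Banach-algebra property of $(C^{n+1,\alpha})^{m\times m}$ place $Y^{-1}\in(C^{n+1,\alpha})^{m\times m}$, and then $A\in(C^{n,\alpha})^{m\times m}$ by the Banach-algebra property on $C^{n,\alpha}$. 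Continuity of $\Upsilon^{-1}\colon Y\mapsto -Y'Y^{-1}$ follows by composing continuity of differentiation $(C^{n+1,\alpha})^{m\times m}\to(C^{n,\alpha})^{m\times m}$, continuity of matrix inversion on nonsingular matrices (again via the cofactor formula), and continuity of multiplication in $C^{n,\alpha}$.

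The substantive step is continuity of $\Upsilon$ itself. Assume $A_k\to A$ in $(C^{n,\alpha})^{m\times m}$, set $Y_k:=\Upsilon(A_k)$, $Y:=\Upsilon(A)$, and $Z_k:=Y_k-Y$. Subtracting the defining equations gives the linear Cauchy problem
\[
Z_k'(t)=-A_k(t)Z_k(t)-\bigl(A_k(t)-A(t)\bigr)Y(t),\qquad Z_k(t_0)=0.
\]
Passing to the equivalent integral equation and applying Gronwall's inequality (using that $\{A_k\}$ is bounded in $C^{(0)}$) gives $\|Z_k\|_{C^{(0)}}\leq M\,\|A_k-A\|_{C^{(0)}}\to 0$, and feeding this back into the ODE yields $\|Z_k'\|_{C^{(0)}}\to 0$. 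Hence $Z_k\to 0$ in $(C^{(1)})^{m\times m}$, and by the mean-value estimate $|Z_k(t_2)-Z_k(t_1)|\leq(b-a)^{1-\alpha}\|Z_k'\|_{C^{(0)}}|t_2-t_1|^{\alpha}$, which is valid for every $\alpha\in[0,1]$, this upgrades to $Z_k\to 0$ in $(C^{0,\alpha})^{m\times m}$.

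The final step is a bootstrap in the regularity index $l$. Assuming $Z_k\to 0$ in $(C^{l,\alpha})^{m\times m}$ for some $0\leq l\leq n$, the Banach-algebra inequality on $C^{l,\alpha}$, combined with the convergence $A_k\to A$ in $(C^{n,\alpha})^{m\times m}\subset(C^{l,\alpha})^{m\times m}$ and the uniform boundedness of $\|Y_k\|_{l,\alpha}$ (inductively inherited from the preceding step), gives $A_kZ_k\to 0$ and $(A_k-A)Y\to 0$ in $(C^{l,\alpha})^{m\times m}$, so $Z_k'\to 0$ in $(C^{l,\alpha})^{m\times m}$; integration using $Z_k(t_0)=0$ then promotes this to $Z_k\to 0$ in $(C^{l+1,\alpha})^{m\times m}$. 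After $n+1$ iterations starting from $l=0$ we arrive at $Z_k\to 0$ in $(C^{n+1,\alpha})^{m\times m}$, which is the sought continuity. The main obstacle I anticipate is keeping careful track of the Banach-algebra constants and the uniform $(C^{l,\alpha})$-bounds on $\{Y_k\}$ across the induction, though each step of this bookkeeping is routine once the base case in $(C^{0,\alpha})^{m\times m}$ has been obtained from Gronwall.
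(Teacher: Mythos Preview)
Your proof is correct, but the argument for the continuity of $\Upsilon$ differs genuinely from the paper's. The paper recasts the Cauchy problem \eqref{3.0.3.1}--\eqref{3.0.3.2} as the operator equation $(I+V_{A})Y=I_{m}$, where $V_{A}$ is the Volterra-type integral operator \eqref{3.op.v}, and then invokes Lemma~\ref{3.p.lem.1}: this lemma supplies both the norm bound $\|V_{A}\|\leq\varkappa\|A\|_{n,\alpha}$ (so that $A_k\to A$ in $(C^{n,\alpha})^{m\times m}$ gives $I+V_{A_k}\to I+V_{A}$ in operator norm on $(C^{n+1,\alpha})^{m\times m}$) and the invertibility of each $I+V_{A_k}$ (quasinilpotency). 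Continuity of $\Upsilon$ then follows in one line from the continuity of operator inversion, $\Upsilon A_k=(I+V_{A_k})^{-1}I_m\to(I+V_{A})^{-1}I_m=\Upsilon A$. Your route instead stays at the level of the differential equation: a Gronwall bound gives the base case $Z_k\to0$ in $(C^{0,\alpha})^{m\times m}$, and a clean induction on the regularity index~$l$, using the Banach-algebra structure of $C^{l,\alpha}$ and integration $C^{l,\alpha}\to C^{l+1,\alpha}$, lifts this to the target space. The paper's approach is more concise and packages the regularity issues into a single operator-norm estimate, at the cost of first establishing Lemma~\ref{3.p.lem.1}; your approach is more elementary and self-contained, requiring no auxiliary operator-theoretic lemma, but trades this for an explicit $(n{+}1)$-step bootstrap. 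Both are sound; the well-definedness, bijectivity, and continuity of $\Upsilon^{-1}$ are handled essentially identically in the two proofs.
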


Our proof of this theorem uses some properties of an integral operator $V_{A}$ associated with the boundary-value problem \eqref{3.0.3.1}, \eqref{3.0.3.2} with $A\in(C^{n,\alpha})^{m\times m}$.
This operator transforms any function $Y\in(C^{n+1,\alpha})^{m\times m}$ into the function
\begin{equation}\label{3.op.v}
(V_{A}Y)(t):=\int\limits_{t_0}^{t}A(s)Y(s)\,ds\quad\mbox{of}\quad t\in[a,b].
\end{equation}
Note that
\begin{equation}\label{3.equivalence}
\Upsilon A=Y\;\Leftrightarrow\;
(I+V_{A})Y=I_{m}
\end{equation}
for arbitrarily given $Y\in(C^{n+1,\alpha})^{m\times m}$. Henceforth $I$ denotes the identity operator in the relevant space.

\begin{lemma}\label{3.p.lem.1}
Let $A\in(C^{n,\alpha})^{m\times m}$. Then the linear operator $V_{A}$ is compact on $(C^{n+1,\alpha})^{m\times m}$, and its norm satisfies the inequality
\begin{equation}\label{3.int}
\|V_{A}\|\leq\varkappa\,\|A\|_{n,\alpha},
\end{equation}
where $\varkappa$ is a certain positive number that does not depend on $A$. Besides, this operator is quasinilpotent; i.e., for every $\lambda\in\mathbb{C}$ we have an isomorphism
\begin{equation}\label{3.l2.op.1}
I-\lambda V_{A}:(C^{n+1,\alpha})^{m\times m}\leftrightarrow (C^{n+1,\alpha})^{m\times m}.
\end{equation}
\end{lemma}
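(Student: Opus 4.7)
My plan is to factor $V_A = V\circ M_A$, where $M_A\colon Y\mapsto AY$ is multiplication by $A$ and $V$ is the antiderivative $(Vg)(t):=\int_{t_0}^{t}g(s)\,ds$. Using the Banach-algebra property of $(C^{n,\alpha})^{m\times m}$ (with an equivalent norm, as recalled at the end of Section~\ref{3sec2}), one has $\|AY\|_{n,\alpha}\le c_{1}\|A\|_{n,\alpha}\|Y\|_{n,\alpha}$, and hence $\|AY\|_{n,\alpha}\le c_{1}\|A\|_{n,\alpha}\|Y\|_{n+1,\alpha}$ through the continuous inclusion $(C^{n+1,\alpha})^{m\times m}\hookrightarrow(C^{n,\alpha})^{m\times m}$. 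Since $(Vg)'=g$, the antiderivative $V$ maps $(C^{n,\alpha})^{m\times m}$ continuously into $(C^{n+1,\alpha})^{m\times m}$. Composing these two bounded operators yields at once the norm estimate~\eqref{3.int} with a constant $\varkappa$ that depends only on $[a,b]$, $n$ and $\alpha$.

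Next I would establish compactness by exploiting the compact embedding $(C^{n+1,\alpha})^{m\times m}\hookrightarrow(C^{n,\alpha})^{m\times m}$. Given a bounded sequence $\{Y_k\}$ in $(C^{n+1,\alpha})^{m\times m}$, successive applications of Arzel\`a--Ascoli to its derivatives of order $0,\ldots,n+1$ (all of which are uniformly bounded and equicontinuous, the topmost one because it is uniformly $\alpha$-H\"older) produce a subsequence along which $Y_k^{(j)}$ converges uniformly for every $j\le n+1$. Splitting the H\"older quotient of $Y_k^{(n)}-Y^{(n)}$ into the regions $|t-s|<\delta$ and $|t-s|\ge\delta$ and optimizing $\delta$ then upgrades this convergence to convergence in $(C^{n,\alpha})^{m\times m}$. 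Once this is in hand, the factorization
$$
V_{A}\colon(C^{n+1,\alpha})^{m\times m}\xhookrightarrow{\text{compact}}(C^{n,\alpha})^{m\times m}\xrightarrow{M_{A}}(C^{n,\alpha})^{m\times m}\xrightarrow{V}(C^{n+1,\alpha})^{m\times m}
$$
exhibits $V_{A}$ as a bounded operator composed with a compact one, hence compact.

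For the quasinilpotency I would reduce matters, via the Fredholm alternative available thanks to compactness, to showing that $\ker(I-\lambda V_{A})=\{0\}$ for every $\lambda\in\mathbb{C}$. The case $\lambda=0$ is trivial. For $\lambda\ne0$, if $Y\in(C^{n+1,\alpha})^{m\times m}$ satisfies $Y=\lambda V_{A}Y$, then evaluating at $t_{0}$ gives $Y(t_{0})=0$, while differentiating gives $Y'(t)=\lambda A(t)Y(t)$; by the classical uniqueness theorem for linear ODEs we conclude $Y\equiv0$. The Fredholm alternative then promotes triviality of the kernel to bijectivity and yields the isomorphism~\eqref{3.l2.op.1}.

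The step I expect to require the most care is the compact embedding $(C^{n+1,\alpha})^{m\times m}\hookrightarrow(C^{n,\alpha})^{m\times m}$ uniformly across all admissible parameters, including the endpoints $\alpha=0$ and $\alpha=1$, because the passage from uniform convergence of the $(n+1)$-st derivative to $\alpha$-H\"older convergence of the $n$-th derivative has to be handled by a short but genuine interpolation-type argument. Once this compact embedding is secured, the rest of the lemma reduces to standard Banach-algebra bookkeeping in the H\"older spaces and the invocation of the Fredholm alternative.
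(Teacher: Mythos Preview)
Your proposal is correct. The norm estimate and compactness parts match the paper's proof almost exactly: the paper likewise bounds $\|V_{A}Y\|_{n+1,\alpha}$ by a constant times $\|A\|_{n,\alpha}\|Y\|_{n,\alpha}$ (your factorization $V_{A}=V\circ M_{A}$ is just a repackaging of this calculation) and then invokes the compact embedding $(C^{n+1,\alpha})^{m\times m}\hookrightarrow(C^{n,\alpha})^{m\times m}$, which the paper simply cites while you sketch a proof via Arzel\`a--Ascoli.

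The quasinilpotency argument is where you genuinely diverge. You use the compactness just proved together with the Fredholm alternative to reduce the isomorphism~\eqref{3.l2.op.1} to injectivity, and then dispatch injectivity via the uniqueness theorem for the linear ODE $Y'=\lambda AY$, $Y(t_{0})=0$. The paper instead bootstraps: it takes as known that $I-\lambda V_{A}$ is an isomorphism on $(C^{(0)})^{m\times m}$ and climbs the scale $(C^{k,\alpha})^{m\times m}$ for $k=0,1,\ldots,n+1$ by induction, at each step observing that the restriction of the previously established isomorphism to the finer space is still bijective (surjectivity because $Y=\lambda V_{A}Y+F$ gains one level of regularity whenever $F$ does). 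Your route is shorter and exploits the compactness already in hand; the paper's route avoids Riesz--Schauder theory and makes the regularity gain explicit, in the same spirit as Lemma~\ref{3.sk}. Both are sound.

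One small remark on your compactness sketch: at $\alpha=0$ the $(n{+}1)$-st derivative is merely continuous and need not be equicontinuous, so Arzel\`a--Ascoli does not apply to it directly. But you do not need it to: uniform convergence of derivatives up to order $n$ already gives convergence in $(C^{n,0})^{m\times m}=(C^{(n)})^{m\times m}$, and the uniform bound on the $(n{+}1)$-st derivative is exactly what makes the $n$-th derivative equicontinuous. Your interpolation argument with the $\delta$-splitting is only needed when $0<\alpha\leq1$.
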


\begin{proof}
Given $Y\in(C^{n,\alpha})^{m\times m}$ we can write
\begin{align*}
\|V_{A}Y\|_{n+1,\alpha}&=\|V_{A}Y\|_{0}+\|AY\|_{n,\alpha}\leq
(b-a)\,\|AY\|_{0}+\|AY\|_{n,\alpha}\\
&\leq(b-a)\,c_{0}\,\|A\|_{0}\,\|Y\|_{0}+
c_{1}\,\|A\|_{n,\alpha}\,\|Y\|_{n,\alpha}.
\end{align*}
Here $c_{0}$ and $c_{1}$ are certain positive numbers that do not depend on $A$ and $Y$. Hence,
\begin{equation*}
\|V_{A}Y\|_{n+1,\alpha}\leq c_{2}\,\|A\|_{n,\alpha}\,\|Y\|_{n,\alpha}
\end{equation*}
with $c_{2}:=((b-a)c_{0}+c_{1})$. Thus, the mapping $Y\mapsto V_{A}Y$ is a bounded operator from $(C^{n,\alpha})^{m\times m}$ to $(C^{n+1,\alpha})^{m\times m}$, and its norm does not exceed $c_{2}\|A\|_{n,\alpha}$. Hence, owing to the compact embedding $(C^{n+1,\alpha})^{m\times m}\hookrightarrow(C^{n,\alpha})^{m\times m}$, we conclude that the restriction of this mapping on $(C^{n+1,\alpha})^{m\times m}$ is a compact operator on $(C^{n+1,\alpha})^{m\times m}$ and that the norm of this operator satisfies \eqref{3.int} with $\varkappa:=c_{2}c_{3}$. Here, $c_{3}$ is the norm of this compact embedding.

Let $\lambda\in\mathbb{C}$ and deduce the isomorphism \eqref{3.l2.op.1}. Since $A$ is at least continuous on $[a,b]$, the mapping $Y\mapsto Y-\lambda V_{A}Y$, with $Y\in(C^{(0)})^{m\times m}$, is an isomorphism of the space $(C^{(0)})^{m\times m}$ onto itself.
This fact is well known if $m=1$; its proof for $m\geq2$ is analogous to those for $m=1$. The restriction of this isomorphism to $(C^{0,\alpha})^{m\times m}$ is a bounded injective operator on $(C^{0,\alpha})^{m\times m}$ according to what we have proved in the previous paragraph. This operator is also surjective. Indeed, if $F\in(C^{0,\alpha})^{m\times m}$, then there exists a function $Y\in(C^{(0)})^{m\times m}$ such that $Y-\lambda V_{A}Y=F$; hence,
$$
Y=\lambda V_{A}Y+F\in(C^{0,\alpha})^{m\times m}.
$$
Thus, we have the isomorphism
\begin{equation}\label{3.l2-is-k}
I-\lambda V_{A}:(C^{k,\alpha})^{m\times m}\leftrightarrow (C^{k,\alpha})^{m\times m}.
\end{equation}
in the case of $k=0$.

We now choose an integer $l\in[0,n]$ arbitrarily and assume that the isomorphism \eqref{3.l2-is-k} holds true for $k=l$. Reasoning analogously, we can deduce from our assumption that this isomorphism holds for $k=l+1$. Namely, the restriction of the isomorphism \eqref{3.l2-is-k} for $k=l$ to $(C^{l+1,\alpha})^{m\times m}$ is a bounded injective operator on $(C^{l+1,\alpha})^{m\times m}$. Besides,
if $F\in(C^{l+1,\alpha})^{m\times m}$, then there exists a function $Y\in(C^{l,\alpha})^{m\times m}$ such that $Y-\lambda V_{A}Y=F$; hence,
$$
Y=\lambda V_{A}Y+F\in(C^{l+1,\alpha})^{m\times m}.
$$
Therefore we obtain the isomorphism \eqref{3.l2-is-k} for $k=l+1$.

Thus, we have proved the required isomorphism \eqref{3.l2.op.1} by the induction with respect to the integer $k\in[0,n]$ in \eqref{3.l2-is-k}.
\end{proof}

\begin{proof}[Proof of Theorem~$\ref{3.tg}$] If $A\in(C^{n,\alpha})^{m\times m}$, then $Y:=\Upsilon A\in\mathcal{Y}_{t_{0}}^{n+1,\alpha}$ according to Lemma~\ref{3.sk} and the Liouville-Jacobi formula and then $A(t)=-Y'(t)(Y(t))^{-1}$ for every $t\in[a,b]$. Therefore we have the injective mapping
\begin{equation}\label{3.Upsilon}
\Upsilon:(C^{n,\alpha})^{m\times m}\to\mathcal{Y}_{t_{0}}^{n+1,\alpha}.
\end{equation}
Moreover, it is surjective. Indeed, if $Y\in\mathcal{Y}_{t_{0}}^{n+1,\alpha}$, then $\Upsilon A=Y$ for
$$
A:=-Y'\,Y^{-1}\in(C^{n,\alpha})^{m\times m}.
$$

Let us show that the operator \eqref{3.Upsilon} is continuous. Assume that $A_{k}\to A$ in $(C^{n,\alpha})^{m\times m}$ as $k\to\infty$. According to Lemma~\ref{3.p.lem.1} we get the norm convergence $I+V_{A_{k}}\to I+V_{A}$, as $k\to\infty$, of the bounded operators on $(C^{n+1,\alpha})^{m\times m}$. Hence, owing to the same lemma and equivalence \eqref{3.equivalence}, we conclude that
\begin{equation*}
\Upsilon A_{k}=(I+V_{A_{k}})^{-1}I_{m}\to
(I+V_{A})^{-1}I_{m}=\Upsilon A
\end{equation*}
in $(C^{n+1,\alpha})^{m\times m}$ as $k\to\infty$. Thus, the operator \eqref{3.Upsilon} is continuous.

Its inverse is also continuous. Indeed, if $Y_{k}\to Y$ in $\mathcal{Y}_{t_{0}}^{n+1,\alpha}$ as $k\to\infty$, then
$Y_{k}'\to Y'$ in $(C^{n,\alpha})^{m\times m}$ and $Y^{-1}_{k}\to Y^{-1}$ in $(C^{n+1,\alpha})^{m\times m}$ as $k\to\infty$. Therefore
\begin{equation*}
\Upsilon^{-1}Y_{k}=-Y_{k}'\,Y^{-1}_{k}\to-Y'\,Y^{-1}=\Upsilon^{-1}Y
\end{equation*}
in $(C^{n,\alpha})^{m\times m}$ as $k\to\infty$.
\end{proof}

\section{Proof of the main results}\label{3sec4}

We will divide Main Theorem into three lemmas and prove them.

We associate the continuous linear operator
\begin{equation}\label{3.LBe}
(L(\epsilon),B(\epsilon)):(C^{n+1,\alpha})^{m}\rightarrow (C^{n,\alpha})^{m}\times\mathbb{C}^{m}
\end{equation}
with the boundary-value problem \eqref{3.syste}, \eqref{3.kue}, where $\varepsilon\in[0,\varepsilon_{1})$. According to Theorem~\ref{3.t1}, this operator is Fredholm with index zero.

Therefore Condition (0) is equivalent to that the operator \eqref{3.LBe} for $\epsilon=0$ is an isomorphism
\begin{equation}\label{3.LB0}
(L(0),B(0)):(C^{n+1,\alpha})^{m}\leftrightarrow (C^{n,\alpha})^{m}\times\mathbb{C}^{m}.
\end{equation}

\begin{lemma}\label{3.th-isomorph}
Assume that Condition \rm (0) \it and Limit Conditions \rm (I) \it and \rm (II) \it are fulfilled. Then there exists a positive number $\epsilon_{1}<\epsilon_0$ such that the operator \eqref{3.LBe} is invertible for every $\epsilon\in[0,\epsilon_1)$.
\end{lemma}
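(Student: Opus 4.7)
The approach is to argue by contradiction, exploiting that by Theorem~\ref{3.t1} the operator \eqref{3.LBe} is Fredholm of index zero for every $\epsilon$, so its invertibility is equivalent to the triviality of its kernel. Condition~(0) thus already makes \eqref{3.LB0} an isomorphism. Suppose the conclusion fails; then there exist $\epsilon_k\to 0^+$ and nonzero $y_k\in(C^{n+1,\alpha})^m$ with $L(\epsilon_k)y_k=0$, $B(\epsilon_k)y_k=0$, and $\|y_k\|_{n+1,\alpha}=1$.

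I would then reduce everything to an assertion about the constant $m\times m$ matrices $[B(\epsilon)Y(\epsilon)]$ defined before Theorem~\ref{3.t2}. Fix $t_0\in[a,b]$ and let $Y(\epsilon_k)$ be the matriciant of the system $L(\epsilon_k)y=0$, normalized by $Y(\epsilon_k)(t_0)=I_m$. Then $y_k=Y(\epsilon_k)p_k$ with $p_k:=y_k(t_0)\in\mathbb{C}^m$, and evaluating at $t_0$ gives $|p_k|\leq\|y_k\|_{n+1,\alpha}=1$. Limit Condition~(I) combined with the continuity of the mapping $\Upsilon$ in Theorem~\ref{3.tg} yields $Y(\epsilon_k)\to Y(0)$ in $(C^{n+1,\alpha})^{m\times m}$. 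Using the Banach-algebra estimate $\|Y(\epsilon_k)p_k\|_{n+1,\alpha}\leq C\|Y(\epsilon_k)\|_{n+1,\alpha}|p_k|$ together with $\|y_k\|_{n+1,\alpha}=1$, I would deduce a uniform lower bound $|p_k|\geq c>0$. After passing to a subsequence, $p_k\to p$ for some $p\in\mathbb{C}^m$ with $|p|\geq c>0$.

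For the convergence step, Limit Condition~(II) together with the Banach-Steinhaus theorem yields $\sup_k\|B(\epsilon_k)\|=:M<\infty$, since for every fixed $y$ the sequence $\{B(\epsilon_k)y\}$ converges and is therefore bounded. Acting columnwise on the identity $0=B(\epsilon_k)y_k=[B(\epsilon_k)Y(\epsilon_k)]p_k$ and writing $Y_j$ for the $j$-th column, I would estimate
\begin{align*}
|B(\epsilon_k)Y(\epsilon_k)_j-B(0)Y(0)_j|
&\leq M\,\|Y(\epsilon_k)_j-Y(0)_j\|_{n+1,\alpha}\\
&\quad+|B(\epsilon_k)Y(0)_j-B(0)Y(0)_j|,
\end{align*}
where the first term vanishes by $Y(\epsilon_k)\to Y(0)$ in $(C^{n+1,\alpha})^{m\times m}$ and the second by Condition~(II) applied to the fixed element $Y(0)_j$. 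Hence $[B(\epsilon_k)Y(\epsilon_k)]\to[B(0)Y(0)]$ in $\mathbb{C}^{m\times m}$, and letting $k\to\infty$ in $[B(\epsilon_k)Y(\epsilon_k)]p_k=0$ produces $[B(0)Y(0)]p=0$ with $p\neq 0$, contradicting $\det[B(0)Y(0)]\neq 0$, which follows from Condition~(0) via Theorem~\ref{3.t2}.

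The main obstacle is that Limit Condition~(II) provides only strong (pointwise) convergence $B(\epsilon)\to B(0)$, not convergence in operator norm, so the routine "small perturbation of an isomorphism" argument is unavailable. The way around this is the reduction to $\mathbb{C}^{m\times m}$ described above: the matriciants $Y(\epsilon_k)$ genuinely converge in the H\"older norm by Theorem~\ref{3.tg}, and this norm convergence combined with the uniform boundedness of $\{B(\epsilon_k)\}$ supplied by Banach-Steinhaus is exactly strong enough to push the argument through at the finite-dimensional level of the matrices $[B(\epsilon_k)Y(\epsilon_k)]$.
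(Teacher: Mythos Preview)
Your argument is correct and rests on exactly the same mechanism as the paper's proof: use Theorem~\ref{3.tg} to get $Y(\cdot,\epsilon)\to Y(\cdot,0)$ in $(C^{n+1,\alpha})^{m\times m}$, combine this with Limit Condition~(II) (and the uniform boundedness it entails) to obtain $[B(\epsilon)Y(\cdot,\epsilon)]\to[B(0)Y(\cdot,0)]$ in $\mathbb{C}^{m\times m}$, and then invoke Theorem~\ref{3.t2}. The only difference is packaging: the paper argues directly---since $\det[B(0)Y(\cdot,0)]\neq0$ and the determinant is continuous, $\det[B(\epsilon)Y(\cdot,\epsilon)]\neq0$ for all small $\epsilon$, done. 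Your contradiction framework with normalized kernel elements $y_k$, the vectors $p_k$, and a subsequence extraction is correct but unnecessary: once you have established the matrix convergence $[B(\epsilon_k)Y(\epsilon_k)]\to[B(0)Y(0)]$ (which you do), the direct argument is already available and saves the compactness step. Your explicit use of Banach--Steinhaus to justify the splitting estimate is a point the paper leaves implicit, so in that respect your write-up is more careful.
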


\begin{remark}\rm
Let Condition (0) be fulfilled. Then Limit Conditions (I) and (II) taken together do not imply that the operator~\eqref{3.LBe} with $0<\varepsilon\ll1$ is a small perturbation of the isomorphism \eqref{3.LB0} in the operator norm. This implication would be true if Limit Condition~(II) were replaced by the essentially stronger condition of the norm convergence of the operators $B(\epsilon)$ to $B(0)$ as $\epsilon\to0+$. Therefore the conclusion of Lemma~\ref{3.th-isomorph} does not follow from the known fact that the set of all isomorphisms between given Banach spaces is open in the uniform operator topology.
\end{remark}

\begin{proof}[Proof of Lemma $\ref{3.th-isomorph}$]
In view of Theorem~\ref{3.tg} we let
\begin{equation*}
Y(\cdot,\varepsilon):=\Upsilon A(\cdot,\varepsilon)\in\mathcal{Y}_{t_0}^{n+1,\alpha}
\quad\mbox{for each}\quad\varepsilon\in[0,\varepsilon_{0}).
\end{equation*}
Limit Condition (I) implies by this theorem that
\begin{equation}\label{3.Ylimit}
Y(\cdot,\epsilon)\to Y(\cdot,0)\quad\mbox{in}\quad (C^{n+1,\alpha})^{m\times m}\quad\mbox{as}\quad \epsilon\to0+.
\end{equation}
Hence, owing to Limit Condition (II) we obtain
\begin{equation}\label{3.BYlimit}
[B(\epsilon)Y(\cdot,\epsilon)]\to[B(0)Y(\cdot,0)]\quad\mbox{in}\quad
\mathbb{C}^{m\times m}\quad\mbox{as}\quad\epsilon\to0+.
\end{equation}
Observe that $\det[B(0)Y(\cdot,0)]\neq0$ by Condition~(0) and in view of Theorems \ref{3.t1} and~\ref{3.t2}. Therefore there exists a number $\varepsilon_{1}\in(0,\varepsilon_{0})$ such that
\begin{equation}\label{3.BYdet}
\det[B(\epsilon)Y(\cdot,\epsilon)]\neq0
\quad\mbox{for each}\quad\epsilon\in[0,\varepsilon_{1}).
\end{equation}
Thus, by Theorem~\ref{3.t2}, the operator \eqref{3.LBe} is invertible whenever $0\leq\epsilon<\varepsilon_{1}$.
\end{proof}

\begin{lemma}\label{3.th-limit}
Assume that Condition \rm (0) \it and all Limit Conditions \rm (I)--(IV) \it are fulfilled. Then the unique solution $y(\cdot,\epsilon)\in(C^{n+1,\alpha})^{m}$ to
the boundary-value problem \eqref{3.syste}, \eqref{3.kue} with $\epsilon\in(0,\epsilon_{1})$ has the limit property \eqref{3.gu}.
\end{lemma}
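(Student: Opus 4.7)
The plan is to use the explicit variation-of-parameters representation built from the matriciant $Y(\cdot,\varepsilon):=\Upsilon A(\cdot,\varepsilon)$ from Theorem~\ref{3.tg}. Every solution to $L(\varepsilon)y(\cdot,\varepsilon)=f(\cdot,\varepsilon)$ in $(C^{n+1,\alpha})^{m}$ has the form
\begin{equation*}
y(t,\varepsilon)=Y(t,\varepsilon)c(\varepsilon)+Y(t,\varepsilon)g(t,\varepsilon),\qquad g(t,\varepsilon):=\int_{t_{0}}^{t}Y(s,\varepsilon)^{-1}f(s,\varepsilon)\,ds,
\end{equation*}
and imposing \eqref{3.kue} identifies the constant vector
\begin{equation*}
c(\varepsilon)=[B(\varepsilon)Y(\cdot,\varepsilon)]^{-1}\bigl(q(\varepsilon)-B(\varepsilon)[Y(\cdot,\varepsilon)g(\cdot,\varepsilon)]\bigr),
\end{equation*}
which is well defined for $\varepsilon\in[0,\varepsilon_{1})$ by \eqref{3.BYdet}. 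So it suffices to establish factor-by-factor convergence of this formula in $(C^{n+1,\alpha})^{m}$.

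For the matriciant pieces, Theorem~\ref{3.tg} together with Limit Condition~(I) gives $Y(\cdot,\varepsilon)\to Y(\cdot,0)$ in $(C^{n+1,\alpha})^{m\times m}$; continuity of inversion in the Banach algebra $(C^{n+1,\alpha})^{m\times m}$ then yields $Y(\cdot,\varepsilon)^{-1}\to Y(\cdot,0)^{-1}$ in the same space. Combining with Limit Condition~(III) via continuity of multiplication in $(C^{n,\alpha})^{m}$ gives $Y(\cdot,\varepsilon)^{-1}f(\cdot,\varepsilon)\to Y(\cdot,0)^{-1}f(\cdot,0)$ in $(C^{n,\alpha})^{m}$, and since integration is a bounded operator from $(C^{n,\alpha})^{m}$ into $(C^{n+1,\alpha})^{m}$, I conclude $g(\cdot,\varepsilon)\to g(\cdot,0)$ in $(C^{n+1,\alpha})^{m}$, hence also $Y(\cdot,\varepsilon)g(\cdot,\varepsilon)\to Y(\cdot,0)g(\cdot,0)$ in $(C^{n+1,\alpha})^{m}$.

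The main obstacle is applying $B(\varepsilon)$ to arguments that themselves depend on $\varepsilon$, since Limit Condition~(II) only provides pointwise convergence of these functionals. I would resolve this via the Banach--Steinhaus theorem: for any sequence $\varepsilon_{k}\to 0+$ the pointwise convergence $B(\varepsilon_{k})z\to B(0)z$ renders $\{B(\varepsilon_{k})\}$ pointwise bounded, hence uniformly bounded with $M:=\sup_{k}\|B(\varepsilon_{k})\|<\infty$. Then for any convergent sequence $z(\varepsilon_{k})\to z(0)$ in $(C^{n+1,\alpha})^{m}$ the decomposition $B(\varepsilon_{k})z(\varepsilon_{k})-B(0)z(0)=B(\varepsilon_{k})(z(\varepsilon_{k})-z(0))+(B(\varepsilon_{k})-B(0))z(0)$ yields
\begin{equation*}
\|B(\varepsilon_{k})z(\varepsilon_{k})-B(0)z(0)\|_{\mathbb{C}^{m}}\leq M\,\|z(\varepsilon_{k})-z(0)\|_{n+1,\alpha}+\|(B(\varepsilon_{k})-B(0))z(0)\|_{\mathbb{C}^{m}}\to 0.
\end{equation*}
Applied column-wise to $Y(\cdot,\varepsilon_{k})$ and once to $Y(\cdot,\varepsilon_{k})g(\cdot,\varepsilon_{k})$, this produces $[B(\varepsilon_{k})Y(\cdot,\varepsilon_{k})]\to[B(0)Y(\cdot,0)]$ in $\mathbb{C}^{m\times m}$ and $B(\varepsilon_{k})[Y(\cdot,\varepsilon_{k})g(\cdot,\varepsilon_{k})]\to B(0)[Y(\cdot,0)g(\cdot,0)]$ in $\mathbb{C}^{m}$.

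To conclude, Condition~(0) together with Theorems~\ref{3.t1} and~\ref{3.t2} ensures that the limiting matrix $[B(0)Y(\cdot,0)]$ is invertible, so continuity of finite-dimensional matrix inversion gives $[B(\varepsilon_{k})Y(\cdot,\varepsilon_{k})]^{-1}\to[B(0)Y(\cdot,0)]^{-1}$ in $\mathbb{C}^{m\times m}$. Combining this with Limit Condition~(IV) yields $c(\varepsilon_{k})\to c(0)$ in $\mathbb{C}^{m}$, and a final invocation of the continuous action of the Banach algebra $(C^{n+1,\alpha})^{m\times m}$ on $(C^{n+1,\alpha})^{m}$ gives $y(\cdot,\varepsilon_{k})=Y(\cdot,\varepsilon_{k})c(\varepsilon_{k})+Y(\cdot,\varepsilon_{k})g(\cdot,\varepsilon_{k})\to y(\cdot,0)$ in $(C^{n+1,\alpha})^{m}$. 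Since the sequence $\varepsilon_{k}\to 0+$ was arbitrary, this is precisely \eqref{3.gu}.
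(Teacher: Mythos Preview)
Your argument is correct. It differs from the paper's proof mainly in organization: the paper splits $y(\cdot,\varepsilon)=x(\cdot,\varepsilon)+\widehat{y}(\cdot,\varepsilon)$, where $x(\cdot,\varepsilon)$ solves the Cauchy problem $L(\varepsilon)x=f(\cdot,\varepsilon)$, $x(a,\varepsilon)=0$, and $\widehat{y}(\cdot,\varepsilon)$ solves the homogeneous equation with adjusted boundary data $\widehat{q}(\varepsilon)=q(\varepsilon)-B(\varepsilon)x(\cdot,\varepsilon)$. The convergence $x(\cdot,\varepsilon)\to x(\cdot,0)$ is obtained from the \emph{norm} convergence $(L(\varepsilon),C)^{-1}\to(L(0),C)^{-1}$ (which follows from Limit Condition~(I) alone), and the homogeneous case is handled separately via the matriciant, reducing to a finite-dimensional limit for $p(\varepsilon)=[B(\varepsilon)Y(\cdot,\varepsilon)]^{-1}q(\varepsilon)$. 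You instead work directly with the full variation-of-parameters formula and establish convergence of every factor in one pass, making explicit the Banach--Steinhaus argument that the paper leaves implicit when it applies the strongly convergent $B(\varepsilon)$ to $\varepsilon$-dependent arguments. Your route is a bit more elementary, relying only on continuity of algebraic operations in the Banach algebra $(C^{n+1,\alpha})^{m\times m}$ and of integration, rather than on stability of bounded inverses under norm perturbation; the paper's two-step decomposition, on the other hand, isolates more cleanly where each Limit Condition enters (the particular solution uses only~(I) and~(III), the boundary adjustment uses~(II) and~(IV)).
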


\begin{proof} We divide it into two steps.

\emph{Step~$1$.} Here, we prove Lemma~\ref{3.th-limit} in the case where $f(t,\varepsilon)=0$ for arbitrary $t\in[a,b]$ and $\varepsilon\in[0,\varepsilon_{0})$. We use considerations given in the proof of Lemma~\ref{3.th-isomorph}. According to this lemma, the boundary-value problem \eqref{3.syste}, \eqref{3.kue} has a unique solution $y(\cdot,\epsilon)\in(C^{n+1,\alpha})^{m}$ for each $\epsilon\in[0,\epsilon_{1})$. Since $f(\cdot,\varepsilon)\equiv0$, there exists a
vector $p(\varepsilon)\in\mathbb{C}^{m}$ such that  $y(\cdot,\varepsilon)=Y(\cdot,\varepsilon)p(\varepsilon)$. By Limit  Condition~(IV), we have the convergence
\begin{equation*}
[B(\epsilon)Y(\cdot,\epsilon)]p(\epsilon)=q(\epsilon)\to q(0)=[B(0)Y(\cdot,0)]p(0)\quad\mbox{in}\quad
\mathbb{C}^{m}\quad\mbox{as}\quad\epsilon\to0+.
\end{equation*}
Here, the equalities hold true because
\begin{equation*}
q(\epsilon)=B(\epsilon)y(\cdot,\epsilon)=
B(\epsilon)(Y(\cdot,\epsilon)p(\epsilon))=
[B(\epsilon)Y(\cdot,\epsilon)]p(\epsilon)
\end{equation*}
for each $\varepsilon\in[0,\varepsilon_{0})$. This convergence implies by \eqref{3.BYlimit} and \eqref{3.BYdet} that
\begin{equation*}
p(\epsilon)=[B(\epsilon)Y(\cdot,\epsilon)]^{-1}q(\epsilon)\to
[B(0)Y(\cdot,0)]^{-1}q(0)=p(0)
\end{equation*}
in $\mathbb{C}^{m}$ as $\epsilon\to0+$. This property and
\eqref{3.Ylimit} yield \eqref{3.gu}; namely,
\begin{equation*}
y(\cdot,\epsilon)=Y(\cdot,\varepsilon)p(\varepsilon)\to Y(\cdot,0)p(0)=y(\cdot,0)
\end{equation*}
in $(C^{n+1,\alpha})^{m}$ as $\epsilon\to0+$.

\emph{Step~$2$.} Here, we will deduce the limit property \eqref{3.gu} in the general situation from the case examined on step~1. For each $\varepsilon\in[0,\varepsilon_{1})$, we can represent the unique solution $y(\cdot,\epsilon)\in(C^{n+1,\alpha})^m$ to the boundary-value problem \eqref{3.syste}, \eqref{3.kue} in the form $y(\cdot,\epsilon)=x(\cdot,\epsilon)+\widehat{y}(\cdot,\epsilon)$. Here, $x(\cdot,\epsilon)\in(C^{n+1,\alpha})^m$ is the unique solution to the Cauchy problem
\begin{equation*}
L(\epsilon)x(t,\epsilon)=f(t,\epsilon),\quad a\leq t\leq b,\quad\mbox{and}\quad x(a,\epsilon)=0,
\end{equation*}
whereas $\widehat{y}(\cdot,\epsilon)\in(C^{n+1,\alpha})^m$ is the unique solution to the boundary-value problem
\begin{equation*}
L(\epsilon)\widehat{y}(t,\epsilon)=0,\quad a\leq t\leq b,\quad\mbox{and}\quad
B(\epsilon)\widehat{y}(\cdot,\epsilon)=\widehat{q}(\epsilon),
\end{equation*}
with
\begin{equation*}
\widehat{q}(\epsilon):=
q(\epsilon)-B(\epsilon)x(\cdot,\epsilon).
\end{equation*}
The existence and uniqueness of these three solutions is due to Lemma~\ref{3.th-isomorph}.

Let $Cx:=x(a)$ for arbitrary $x\in(C^{n+1,\alpha})^{m}$. Owing to Lemma~\ref{3.sk} and the Banach theorem on inverse operator, we have an isomorphism
\begin{equation*}
(L(\varepsilon),C):
(C^{n+1,\alpha})^m\leftrightarrow(C^{n,\alpha})^{m}\times\mathbb{C}^{m}
\end{equation*}
for each $\varepsilon\in[0,\varepsilon_{0})$. According to Limit Condition~(I) we obtain the norm convergence $L(\varepsilon)\to L(0)$, as $\varepsilon\to0+$, of the bounded operators from $(C^{n+1,\alpha})^m$ to $(C^{n,\alpha})^{m}$. Indeed, for arbitrary $\varepsilon\in[0,\varepsilon_{0})$ and $z\in(C^{n+1,\alpha})^{m}$, we can write
\begin{align*}
\|(L(\varepsilon)-L(0))z\|_{n,\alpha}&=
\|(A(\cdot,\varepsilon)-A(\cdot,0))z\|_{n,\alpha}\\
&\leq c_{0}\,\|A(\cdot,\varepsilon)-A(\cdot,0)\|_{n,\alpha}\,\|z\|_{n,\alpha}\\
&\leq c_{0}\,c_{1}\,
\|A(\cdot,\varepsilon)-A(\cdot,0)\|_{n,\alpha}\,\|z\|_{n+1,\alpha};
\end{align*}
here, $c_{0}$ and $c_{1}$ are certain positive numbers that do not depend on $\varepsilon$ and $z$. The latter convergence implies the norm convergence $(L(\varepsilon),C)^{-1}\to(L(0),C)^{-1}$, as $\varepsilon\to0+$, of the bounded operators from $(C^{n,\alpha})^{m}\times\mathbb{C}^{m}$ to $(C^{n+1,\alpha})^{m}$. Hence, we conclude by Limit Condition~(III) that
\begin{equation}\label{3.conv-x}
\begin{gathered}
x(\cdot,\varepsilon)=(L(\varepsilon),C)^{-1}(f(\cdot,\varepsilon),0)\to
(L(0),C)^{-1}(f(\cdot,0),0)=x(\cdot,0)\\
\mbox{in}\quad(C^{n+1,\alpha})^{m}\quad\mbox{as}\quad\epsilon\to0+.
\end{gathered}
\end{equation}

It follows from this convergence and Limit Conditions (II) and (IV) that
\begin{equation*}
\widehat{q}(\epsilon)=q(\epsilon)-B(\epsilon)x(\cdot,\epsilon)\to
q(0)-B(0)x(\cdot,0)=\widehat{q}(0)
\end{equation*}
in $\mathbb{C}^{m}$ as $\epsilon\to0+$. Hence, according to step~1, Limit Conditions (I), (II) and (IV) imply the convergence
\begin{equation}\label{3.conv-y-hat}
\widehat{y}(\cdot,\epsilon)\to \widehat{y}(\cdot,0)\quad\mbox{in}\quad(C^{n+1,\alpha})^{m}
\quad\mbox{as}\quad\epsilon\to0+.
\end{equation}
Now the relations \eqref{3.conv-x} and \eqref{3.conv-y-hat} yield the  required property \eqref{3.gu} for $y(\cdot,\epsilon)=x(\cdot,\epsilon)+\widehat{y}(\cdot,\epsilon)$.
\end{proof}

\begin{lemma}\label{3.th-necessity}
Assume that  the boundary-value problem \eqref{3.syste}, \eqref{3.kue} satisfies Basic Definition. Then this problem satisfies Limit Conditions \rm (I) \it and \rm (II)\it.
\end{lemma}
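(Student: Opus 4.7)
My plan is to first extract Limit Condition~(I) from a specific matrix of solutions and then to deduce~(II) with the help of~(I). For each $k\in\{1,\ldots,m\}$, I would apply Basic Definition to the (constant in $\varepsilon$) data $f(\cdot,\varepsilon)\equiv 0$ and $q(\varepsilon)\equiv e_{k}$, where $e_{k}$ is the $k$-th standard basis vector of $\mathbb{C}^{m}$. Conditions~(III) and~(IV) hold trivially, so Basic Definition supplies, for each $\varepsilon\in[0,\varepsilon_{1})$, a unique solution $z^{(k)}(\cdot,\varepsilon)\in(C^{n+1,\alpha})^{m}$ with $z^{(k)}(\cdot,\varepsilon)\to z^{(k)}(\cdot,0)$ in $(C^{n+1,\alpha})^{m}$. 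Assembling these columns into $Z(\cdot,\varepsilon):=(z^{(1)}(\cdot,\varepsilon),\ldots,z^{(m)}(\cdot,\varepsilon))\in(C^{n+1,\alpha})^{m\times m}$ gives $Z'(\cdot,\varepsilon)+A(\cdot,\varepsilon)Z(\cdot,\varepsilon)=0$ and $[B(\varepsilon)Z(\cdot,\varepsilon)]=I_{m}$ by construction, together with $Z(\cdot,\varepsilon)\to Z(\cdot,0)$ in $(C^{n+1,\alpha})^{m\times m}$.

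Next I would verify pointwise invertibility of $Z(\cdot,\varepsilon)$ on $[a,b]$ for small $\varepsilon$. Since Basic Definition yields unique solvability at $\varepsilon=0$, Condition~(0) is satisfied, and Theorems~\ref{3.t1} and~\ref{3.t2} give invertibility of the number matrix $[B(0)Y(\cdot,0)]$; writing each $z^{(k)}(\cdot,0)=Y(\cdot,0)p_{k}$ via the matriciant identifies $Z(t_{0},0)=[B(0)Y(\cdot,0)]^{-1}$, invertible in $\mathbb{C}^{m\times m}$. Continuity of the determinant then yields invertibility of $Z(t_{0},\varepsilon)$ for small $\varepsilon$, and the factorization $Z(t,\varepsilon)=Y(t,\varepsilon)Z(t_{0},\varepsilon)$ combined with invertibility of the matriciant $Y(\cdot,\varepsilon)$ extends invertibility to every $t\in[a,b]$. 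The identity $A(\cdot,\varepsilon)=-Z'(\cdot,\varepsilon)Z(\cdot,\varepsilon)^{-1}$, together with continuity of inversion in the Banach algebra $(C^{n+1,\alpha})^{m\times m}$ and the Banach-algebra property of $(C^{n,\alpha})^{m\times m}$, then forces $A(\cdot,\varepsilon)\to A(\cdot,0)$ in $(C^{n,\alpha})^{m\times m}$, which is~(I).

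For~(II), I would fix $y\in(C^{n+1,\alpha})^{m}$ and apply Basic Definition to the data $f(\cdot,\varepsilon):=L(\varepsilon)y$ and $q(\varepsilon):=B(0)y$. Condition~(IV) is trivial, and~(III) follows from~(I) because $L(\varepsilon)y-L(0)y=(A(\cdot,\varepsilon)-A(\cdot,0))y\to 0$ in $(C^{n,\alpha})^{m}$ by the Banach-algebra property. The resulting unique solution $y(\cdot,\varepsilon)$ equals $y$ at $\varepsilon=0$ by uniqueness, so $y(\cdot,\varepsilon)\to y$ in $(C^{n+1,\alpha})^{m}$, and $w(\cdot,\varepsilon):=y-y(\cdot,\varepsilon)\to 0$ satisfies $L(\varepsilon)w(\cdot,\varepsilon)=0$. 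Writing $w(\cdot,\varepsilon)=Z(\cdot,\varepsilon)c(\varepsilon)$ for a unique $c(\varepsilon)\in\mathbb{C}^{m}$, applying $B(\varepsilon)$ gives $B(\varepsilon)y-B(0)y=[B(\varepsilon)Z(\cdot,\varepsilon)]c(\varepsilon)=c(\varepsilon)$; evaluating at $t_{0}$ then yields $c(\varepsilon)=Z(t_{0},\varepsilon)^{-1}w(t_{0},\varepsilon)\to 0$, so $B(\varepsilon)y\to B(0)y$, which is~(II).

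The main obstacle is the first step. Basic Definition supplies (via Banach--Steinhaus) uniform boundedness of the inverse operators $(L(\varepsilon),B(\varepsilon))^{-1}$, but not of the operators themselves, so one cannot directly bound $\{\|A(\cdot,\varepsilon)\|_{n,\alpha}\}$ or $\{\|B(\varepsilon)\|\}$. The indirect route through the matrix of special solutions $Z(\cdot,\varepsilon)$ sidesteps this obstruction by using the algebraic identity $A=-Z'Z^{-1}$ to transfer the strong convergence of solutions directly into the norm convergence of coefficients.
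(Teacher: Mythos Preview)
Your argument is correct. The derivation of Limit Condition~(I) is essentially identical to the paper's Step~1: both build the matrix $Z(\cdot,\varepsilon)$ of solutions to $L(\varepsilon)z=0$ normalized by $[B(\varepsilon)Z(\cdot,\varepsilon)]=I_{m}$, establish its pointwise invertibility, and read off $A(\cdot,\varepsilon)=-Z'(\cdot,\varepsilon)Z(\cdot,\varepsilon)^{-1}\to A(\cdot,0)$. The only difference is how invertibility is checked: the paper observes directly that a linear dependence among the columns would force $[B(\varepsilon)Z(\cdot,\varepsilon)]$ to be singular for \emph{every} $\varepsilon\in[0,\varepsilon_{1})$, whereas you deduce invertibility of $Z(t_{0},\varepsilon)$ only for small $\varepsilon$ by continuity from $Z(t_{0},0)=[B(0)Y(\cdot,0)]^{-1}$. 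Both suffice; the paper's argument is a bit cleaner and gives more.

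Your route to Limit Condition~(II), however, is genuinely different and more economical. The paper proceeds in two further steps: it first proves by contradiction that $\sup_{\varepsilon}\|B(\varepsilon)\|<\infty$ (choosing near-maximizers $z_{k}$, rescaling, and extracting a subsequence of the resulting $q(\varepsilon^{(k)})$), and then combines this uniform bound with the strong convergence of the inverses $(L(\varepsilon),B(\varepsilon))^{-1}$ to estimate $\|B(\varepsilon)y-B(0)y\|$. You bypass the boundedness step entirely by feeding $f(\cdot,\varepsilon)=L(\varepsilon)y$ and the \emph{constant} $q(\varepsilon)=B(0)y$ into Basic Definition, so that the discrepancy $w=y-y(\cdot,\varepsilon)$ lies in $\ker L(\varepsilon)$ and the identity $[B(\varepsilon)Z(\cdot,\varepsilon)]=I_{m}$ yields $B(\varepsilon)y-B(0)y=c(\varepsilon)=Z(t_{0},\varepsilon)^{-1}w(t_{0},\varepsilon)\to 0$. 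This exploits the fundamental matrix $Z$ a second time and avoids any operator-norm considerations. The paper's approach has the side benefit of isolating the uniform bound on $\|B(\varepsilon)\|$ explicitly, but since that bound is recovered anyway from~(II) via Banach--Steinhaus (as done in the proof of Theorem~\ref{3.th-bound}), nothing is lost by your shortcut.
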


\begin{proof}
We divide it into three steps.

\emph{Step~$1$.} We will prove here that the boundary-value problem \eqref{3.syste}, \eqref{3.kue} satisfies Limit Condition~(I). According to condition ($\ast$) of Basic Definition, the operator \eqref{3.LBe} is invertible for each $\varepsilon\in[0,\varepsilon_{1})$. Given $\varepsilon\in[0,\varepsilon_{1})$, we consider the matrix boundary-value problem
\begin{gather}\label{3.matrix-eq}
Y'(t,\varepsilon)+A(t,\varepsilon)Y(t,\varepsilon)=0\cdot I_{m},
\quad a\leq t\leq b,\\
[BY(\cdot,\varepsilon)]=I_{m}. \label{3.matrix-bound-cond}
\end{gather}
Note that it is a union of $m$ boundary-value problem \eqref{3.syste}, \eqref{3.kue} whose right-hand sides are independent of $\varepsilon$.
Therefore the matrix problem \eqref{3.matrix-eq}, \eqref{3.matrix-bound-cond} has a unique solution $Y(\cdot,\varepsilon)\in(C^{n+1,\alpha})^{m\times m}$. Moreover, it follows from condition $(\ast\ast)$ of Basic Definition that
\begin{equation}\label{3.lim-Y}
Y(\cdot,\varepsilon)\to Y(\cdot,0)\quad\mbox{in}\quad (C^{n+1,\alpha})^{m\times m}\quad\mbox{as}\quad\varepsilon\to0+.
\end{equation}
We assert that
\begin{equation}\label{3.det-not0}
\det Y(t,\varepsilon)\neq0\quad\mbox{for each}\quad t\in[a,b].
\end{equation}
Indeed, if \eqref{3.det-not0} were wrong, the function columns of the matrix $Y(\cdot,\varepsilon)$ would be linearly dependent, which would contradict \eqref{3.matrix-bound-cond}. Now, formulas \eqref{3.lim-Y} and \eqref{3.det-not0} yield the required convergence
\begin{equation*}
A(\cdot,\varepsilon)=-Y'(\cdot,\varepsilon)(Y(\cdot,\varepsilon))^{-1}\to
-Y'(\cdot,0)(Y(\cdot,0))^{-1}=A(\cdot,0)
\end{equation*}
in $(C^{n,\alpha})^{m\times m}$ as $\varepsilon\to0+$. So, the boundary-value problem \eqref{3.syste}, \eqref{3.kue} satisfies Limit Condition~(I). Note that then
\begin{equation}\label{3.bound-norm-A}
\sup\bigl\{\|A(\epsilon)\|_{n,\alpha}:
\varepsilon\in[0,\epsilon_{3})\bigr\}<\infty\quad\mbox{for some number}\quad\varepsilon_{3}\in(0,\varepsilon_{1}).
\end{equation}

\emph{Step~$2$.} To prove that this problem satisfies Limit Condition~(II), we will show on this step that
\begin{equation}\label{3.bound-norm-B}
\sup\bigl\{\|B(\epsilon)\|:
\varepsilon\in[0,\epsilon_{4})\bigr\}<\infty\quad\mbox{for some number}\quad\varepsilon_{4}\in(0,\varepsilon_{1}).
\end{equation}
Here, $\|\cdot\|$ stands for the norm of the bounded operator
\begin{equation*}
B(\epsilon):(C^{n+1,\alpha})^m\to\mathbb{C}^{m}.
\end{equation*}
Suppose the contrary; then there exists a sequence of numbers $\varepsilon^{(k)}\in(0,\varepsilon_{1})$, with $1\leq k\in\mathbb{Z}$, such that $\varepsilon^{(k)}\to0$ and
\begin{equation}\label{3.norm-B-to-infty}
0<\|B(\epsilon^{(k)})\|\to\infty\quad\mbox{as}\quad k\to\infty.
\end{equation}
Given an integer $k\geq1$, we can choose a function $z_{k}\in(C^{n+1,\alpha})^{m}$ such that
\begin{equation}\label{3.property-z}
\|z_{k}\|_{n+1,\alpha}=1\quad\mbox{and}\quad
\|B(\epsilon^{(k)})z_{k}\|_{\mathbb{C}^{m}}\geq
\frac{1}{2}\,\|B(\epsilon^{(k)})\|.
\end{equation}
We let
\begin{gather*}
y(\cdot,\epsilon^{(k)}):=
\|B(\epsilon^{(k)})\|^{-1}\,z_{k}\in(C^{n+1,\alpha})^{m},\\
f(\cdot,\epsilon^{(k)}):=L(\epsilon^{(k)})\,y(\cdot,\epsilon^{(k)})
\in(C^{n,\alpha})^{m},\\
q(\epsilon^{(k)}):=B(\epsilon^{(k)})\,y(\cdot,\epsilon^{(k)})\in
\mathbb{C}^{m}.
\end{gather*}
It follows from \eqref{3.norm-B-to-infty} and \eqref{3.property-z} that
\begin{equation}\label{3.limit-y}
y(\cdot,\epsilon^{(k)})\to0\quad\mbox{in}\quad(C^{n+1,\alpha})^{m}\quad
\mbox{as}\quad k\to\infty.
\end{equation}
This implies that
\begin{equation}\label{3.limit-f}
f(\cdot,\epsilon^{(k)})\to0\quad\mbox{in}\quad(C^{n,\alpha})^{m}\quad
\mbox{as}\quad k\to\infty
\end{equation}
because $A(\cdot,\varepsilon)$ satisfies Limit Condition~(I) as we have proved on step~1. Besides, it follows from \eqref{3.property-z} that
\begin{equation*}
\frac{1}{2}\leq\|q(\epsilon^{(k)})\|_{\mathbb{C}^{m}}\leq1.
\end{equation*}
Hence, there is a convergent subsequence $(q(\epsilon^{(k_j)}))_{j=1}^{\infty}$ of the sequence $(q(\epsilon^{(k)}))_{k=1}^{\infty}$, with
\begin{equation}\label{3.limit-q}
q(0):=\lim_{j\to\infty}q(\epsilon^{(k_j)})\neq0
\quad\mbox{in}\quad\mathbb{C}^{m}.
\end{equation}

Thus, for every integer $j\geq1$, the function $y(\cdot,\epsilon^{(k_j)})\in(C^{n+1,\alpha})^{m}$ is a unique solution to the boundary-value problem
\begin{gather*}
L(\epsilon^{(k_j)})\,y(t,\epsilon^{(k_j)})=f(t,\epsilon^{(k_j)}),
\quad a\leq t\leq b,\\
B(\epsilon^{(k_j)})\,y(\cdot,\epsilon^{(k_j)})=q(\epsilon^{(k_j)}).
\end{gather*}
According to condition $(\ast\ast)$ of Basic Definition, it follows from \eqref{3.limit-f} and \eqref{3.limit-q} that the function $y(\cdot,\epsilon^{(k_j)})$ converges to the unique solution $y(\cdot,0)$ of the limiting boundary-value problem
\begin{equation*}
L(0)y(t,0)=0,\quad a\leq t\leq b,\qquad\mbox{and}\qquad
B(0)y(\cdot,0)=q(0).
\end{equation*}
This convergence holds in $(C^{n+1,\alpha})^{m}$ as $t\to0+$. Owing to \eqref{3.limit-y} we conclude that $y(\cdot,0)=0$, which contradicts the boundary condition $B(0)y(\cdot,0)=q(0)$ with $q(0)\neq0$. So, our assumption is wrong, and we have thereby proved~\eqref{3.bound-norm-B}.

\emph{Step~$3$.} Let us prove that the boundary-value problem \eqref{3.syste}, \eqref{3.kue} satisfies Limit Condition~(II). Owing to \eqref{3.bound-norm-A} and \eqref{3.bound-norm-B} we have
\begin{equation}\label{3.bound-norm-LB}
\varkappa':=\sup\bigl\{\|(L(\epsilon),B(\epsilon))\|:
\varepsilon\in[0,\epsilon_{2}')\bigr\}<\infty,
\end{equation}
with $\varepsilon_{2}':=\min\{\varepsilon_{3},\varepsilon_{4}\}<
\varepsilon_{1}$ and with $\|\cdot\|$ denoting the norm of the operator \eqref{3.LBe}. Choose a function $y\in(C^{n+1,\alpha})^{m}$ arbitrarily and put $f(\cdot,\varepsilon):=L(\varepsilon)y\in(C^{n,\alpha})^{m}$ and $q(\varepsilon):=B(\varepsilon)y\in\mathbb{C}^{m}$ for each $\varepsilon\in[0,\varepsilon_{0})$. Thus,
\begin{equation}\label{3.y-as-preimage}
y=(L(\varepsilon),B(\varepsilon))^{-1}
(f(\cdot,\varepsilon),q(\varepsilon))\quad\mbox{for every}\quad
\varepsilon\in[0,\varepsilon_{1}).
\end{equation}
Here, of course, $(L(\varepsilon),B(\varepsilon))^{-1}$ stands for the bounded operator
\begin{equation}\label{3.inverse}
(L(\epsilon),B(\epsilon))^{-1}:(C^{n,\alpha})^{m}\times\mathbb{C}^{m}
\to(C^{n+1,\alpha})^{m}.
\end{equation}
According to condition $(\ast\ast)$ of Basic Definition, we conclude that
\begin{equation}\label{3.convergence-preimage}
\begin{gathered}
(L(\varepsilon),B(\varepsilon))^{-1}(f,q)\to(L(0),B(0))^{-1}(f,q)
\;\;\mbox{in}\;\;(C^{n+1,\alpha})^{m}\;\;\mbox{as}\;\;\varepsilon\to0+\\
\mbox{for arbitrary}\;\;f\in(C^{n,\alpha})^{m}\;\;\mbox{and}\;\;
q\in\mathbb{C}^{m}.
\end{gathered}
\end{equation}
Making use of \eqref{3.bound-norm-LB}, \eqref{3.y-as-preimage}, and \eqref{3.convergence-preimage} successively, we now obtain the following relations for every $\varepsilon\in[0,\varepsilon_{2}')$:
\begin{align*}
&\bigl\|B(\varepsilon)y-B(0)y\bigr\|_{\mathbb{C}^{m}}\leq
\bigl\|(f(\cdot,\varepsilon),q(\varepsilon))-
(f(\cdot,0),q(0))\bigr\|_{(C^{n,\alpha})^{m}\times\mathbb{C}^{m}}\\
&=\bigl\|(L(\varepsilon),B(\varepsilon))
(L(\varepsilon),B(\varepsilon))^{-1}
\bigl((f(\cdot,\varepsilon),q(\varepsilon))-
(f(\cdot,0),q(0))\bigr)\bigr\|_{(C^{n,\alpha})^{m}\times\mathbb{C}^{m}}\\
&\leq\varkappa'\,\bigl\|(L(\varepsilon),B(\varepsilon))^{-1}
\bigl((f(\cdot,\varepsilon),q(\varepsilon))-
(f(\cdot,0),q(0))\bigr)\bigr\|_{n+1,\alpha}\\
&=\varkappa'\,\bigl\|(L(0),B(0))^{-1}(f(\cdot,0),q(0))-
(L(\varepsilon),B(\varepsilon))^{-1}(f(\cdot,0),q(0))\bigr\|_{n+1,\alpha}
\to0
\end{align*}
as $\varepsilon\to0+$. Since $y\in(C^{n+1,\alpha})^{m}$ is arbitrarily chosen, we have proved that the boundary-value problem \eqref{3.syste}, \eqref{3.kue} satisfies Limit Condition~(II).
\end{proof}

We can see that Lemmas \ref{3.th-isomorph}, \ref{3.th-limit}, and \ref{3.th-necessity} constitute Main Theorem.

\begin{proof}[Proof of Theorem~$\ref{3.th-bound}$]
Let us first prove the left-hand part of the two-sided estimate~\eqref{3.bound}. According to Limit Conditions (I) and (II) the bounded operator \eqref{3.LBe} converges strongly to the bounded operator \eqref{3.LB0} as $\epsilon\to0+$. Hence, there exists a positive number $\epsilon_{2}'<\epsilon_{1}$ such that \eqref{3.bound-norm-LB} holds true, with $\|\cdot\|$ denoting the norm of the operator \eqref{3.LBe}. Indeed, supposing the contrary, we obtain a sequence of positive numbers $\varepsilon^{(k)}$, with $1\leq k\in\mathbb{Z}$, such that $\varepsilon^{(k)}\to0$ and $\|(L(\epsilon^{(k)}),B(\epsilon^{(k)})\|\to\infty$ as $k\to\infty$. In view of the Banach-Steinhaus theorem, this contradicts the strong convergence of $(L(\epsilon^{(k)}),B(\epsilon^{(k)}))$ to $((L(0),B(0))$ as $k\to\infty$. So, for every $\epsilon\in(0,\epsilon_{2}')$ we have the bound
\begin{gather*}
\|L(\epsilon)y(\cdot,0)-f(\cdot,\epsilon))\|_{n,\alpha}+
\|B(\epsilon)y(\cdot,0)-q(\epsilon)\|_{\mathbb{C}^{m}}\leq
\varkappa'\|y(\cdot,0)-y(\cdot,\epsilon)\|_{n+1,\alpha};
\end{gather*}
i.e., we obtain the left-hand part of the two-sided estimate \eqref{3.bound} with $\varkappa_{1}:=1/\varkappa'$.

Let us now prove the right-hand part of this estimate. According to Lemma~\ref{3.th-isomorph}, the operator \eqref{3.LBe} has a bounded inverse operator \eqref{3.inverse} for every $\epsilon\in[0,\epsilon_1)$. Moreover, $(L(\epsilon),B(\epsilon))^{-1}$ converges strongly to $(L(0),B(0))^{-1}$ as $\varepsilon\to0+$. Indeed, choosing $f\in(C^{n,\alpha})^{m}$ and $q\in\mathbb{C}^{m}$ arbitrarily, we conclude by Lemma~\ref{3.th-limit} that
\begin{equation*}
(L(\epsilon),B(\epsilon))^{-1}(f,q)=:y(\cdot,\varepsilon)\to
y(\cdot,0):=(L(0),B(0))^{-1}(f,q)
\end{equation*}
in $(C^{n+1,\alpha})^{m}$ as $\varepsilon\to0+$. Hence, there exists a positive number $\epsilon_{2}<\epsilon_{2}'$ such that
\begin{equation*}
\varkappa_{2}:=\sup\bigl\{\|(L(\epsilon),B(\epsilon))^{-1}\|:
\varepsilon\in[0,\epsilon_{2})\bigr\}<\infty,
\end{equation*}
with $\|\cdot\|$ denoting the norm of the operator \eqref{3.inverse}. This follows from the Banach-Steinhaus theorem in a way analogous to that used in the previous paragraph. So, for every $\epsilon\in(0,\epsilon_{2})$ we have the bound
\begin{align*}
\|&y(\cdot,0)-y(\cdot,\epsilon)\|_{n+1,\alpha}\\
&\leq \varkappa_{2}
\bigl(\,\|L(\epsilon)(y(\cdot,0)-y(\cdot,\epsilon))\|_{n,\alpha}+
\|B(\epsilon)(y(\cdot,0)-y(\cdot,\epsilon))\|_{\mathbb{C}^{m}}\bigr),
\end{align*}
i.e. the right-hand part of the the two-sided estimate \eqref{3.bound}.
\end{proof}

\end{document}